\def\@seccntDot{.}
\def\@seccntformat#1{\csname the#1\endcsname\@seccntDot\hskip 0.5em}
\renewcommand\section{\@startsection{section}{1}{\z@}%
{18\p@ \@plus 6\p@ \@minus 3\p@}%
{9\p@ \@plus 6\p@ \@minus 3\p@}%
{\large\bfseries\boldmath}}
\renewcommand\subsection{\@startsection{subsection}{2}{\z@}%
{12\p@ \@plus 6\p@ \@minus 3\p@}%
{3\p@ \@plus 6\p@ \@minus 3\p@}%
{\bfseries\boldmath}}
\renewcommand\subsubsection{\@startsection{subsubsection}{3}{\z@}%
{12\p@ \@plus 6\p@ \@minus 3\p@}%
{\p@}%
{\bfseries\boldmath}}
\theoremstyle{plain}
\newtheorem{thm}{Theorem}
\newtheorem{lem}{Lemma}[section]
\newtheorem{proposition}{Proposition}[section]
\theoremstyle{definition}
\newtheorem{definition}{Definition}[section]
\newtheorem{remark}{Remark}[section]
\newtheorem{claim}{Claim}
\numberwithin{equation}{section}
\title{Localized and weighted versions of extremal problems}
\author{Binlong Li\footnote{School of Mathematics and Statistics, Northwestern Polytechnical University, Xi’an, Shaanxi 710072,
 P. R. China. Supported by NSFC (No. 12071370) and Shaanxi Fundamental Science Research Project
 for Mathematics and Physics (No. 22JSZ009). Email: \texttt{binlongli@nwpu.edu.cn.}}~and~Bo Ning\footnote{Corresponding author. College of Computer Science, Nankai University, Tianjin 300350, P.R. China.
Partially supported by the National Nature Science
Foundation of China (Nos. 12371350 and 12426675) and Fundamental Research Funds for the Central Universities, Nankai University (No. 63243151). Email: \texttt{bo.ning@nankai.edu.cn} (B. Ning).}}
\date{}
\begin{document}
\maketitle

\begin{abstract}
\par\vspace{2mm} Malec and Tompkins (EUJC, 2023)
considered the localized versions of Tur\'an-type problems, and proved a localized
theorem on Erd\H{o}s-Gallai Theorem on paths. Zhao and Zhang (JGT, 2025) gave a long
proof of a localized version of Erd\H{o}s-Gallai Theorem on cycles.

In this paper, we consider several types of generalization of
Tur\'an-type problems, that is, localized versions, weighted versions, and generalized Tur\'an-type
problems, and their connectedness.
We first present very short proofs for recent results of Malec-Tompkins and Zhao-Zhang, respectively.
We use Small Path Double Cover Conjecture,
which was proposed by Bondy (JGT, 1990) and confirmed by Hao Li (JGT, 1990), to prove a weighted localized Tur\'an-type
theorem on paths.
We prove localized versions of Balister-Bollob\'as-Riordan-Schelp
Theorem (JCTB, 2003) on paths and Erd\H{o}s-Gallai Theorem on matchings, respectively. We show that our first localized result implies Balister-Bollob\'as-
Riordan-Schelp Theorem, Erd\H{o}s-Gallai Theorem, and Malec-Tompkins Theorem on paths.
Finally, we present generalized Tur\'an-style generalizations of the Malec-Tompkin's Theorem, and discuss the relationship between some previous theorems in
different motivations.

\noindent{\bfseries Keywords:~}paths and cycles; matchings; weighted theorem; Tur\'an-type problem; small path double cover conjecture
\par\vspace{2mm}

\noindent{\bfseries AMS Classification: 05C38; 05C35}
\end{abstract}
\section{Introduction}

In 1959, Erd\H{o}s and Gallai \cite{EG1959}  proved three theorems
on paths, cycles, and matchings, which are milestone results in extremal graph theory.

\begin{thm}(Erd\H{o}s-Gallai \cite{EG1959})\label{Thm:EG}
Let $G$ be an $n$-vertex graph with $m$ edges.\\
(A) Then $G$ contains a path of length at least
$\frac{2m}{n}$.\\
(B) If $G$ is 2-edge-connected, then $G$ contains a cycle of length at least
$\frac{2m}{n-1}$.\\
(C) Suppose that $G$ has matching number $\mu=k$. If $n\geq 2k+1$ then
$$
e(G)\leq \max\left\{\binom{2k+1}{2},\binom{k}{2}+(n-k)k\right\}.
$$
\end{thm}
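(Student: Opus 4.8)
The plan is to handle the three parts separately. Parts (A) and (B) I would deduce from their equivalent ``extremal'' restatements — the maximum number of edges in a graph avoiding a long path, resp.\ a long cycle — which are proved by induction on $n$ via a longest-path rotation argument, and part (C) I would derive from the Berge--Tutte formula together with a short convexity computation. For (A), the key restatement is $\ex(n,P_{k+1})\le\frac{(k-1)n}{2}$, i.e.\ if $G$ contains no path on $k+1$ vertices then $e(G)\le\frac{(k-1)n}{2}$; granting this, if the longest path of $G$ has $\ell$ edges then $G$ has no $P_{\ell+2}$, so $m\le\frac{\ell n}{2}$ and $\ell\ge\frac{2m}{n}$. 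To prove the restatement I would induct on $n$: if $G$ is disconnected, apply the hypothesis to each component and add; if $G$ is connected, take a longest path $P=v_0v_1\cdots v_\ell$ (so $\ell\le k-1$). If $n=\ell+1$ then $P$ spans $G$, $n\le k$, and $e(G)\le\binom n2\le\frac{(k-1)n}{2}$; otherwise there is a vertex off $P$, and by maximality all neighbours of $v_0$ and of $v_\ell$ lie on $P$. A rotation argument shows that for any index $i$, the edges $v_0v_i$ and $v_\ell v_{i-1}$ cannot both be present: otherwise $v_0v_1\cdots v_{i-1}v_\ell v_{\ell-1}\cdots v_i v_0$ is a cycle on all of $V(P)$, and a neighbour outside $V(P)$ (which exists by connectedness) yields a path longer than $P$. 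Hence the index set of $N(v_0)$ and the translate by $1$ of the index set of $N(v_\ell)$ are disjoint subsets of $\{1,\dots,\ell\}$, so $d(v_0)+d(v_\ell)\le\ell\le k-1$; deleting an endpoint $v$ with $d(v)\le\frac{k-1}{2}$ and applying induction to $G-v$ gives $e(G)\le\frac{(k-1)(n-1)}{2}+\frac{k-1}{2}=\frac{(k-1)n}{2}$.

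For (B) I would similarly pass to: if $G$ has no cycle longer than $c$ (the acyclic case included) then $e(G)\le\frac{c(n-1)}{2}$; given this, a $2$-edge-connected $G$ has a cycle, and taking $c$ to be its circumference yields $c\ge\frac{2m}{n-1}$. For the restatement, a minimal counterexample must be $2$-connected: if $G$ is disconnected or has a cut vertex, write $G=G_1\cup G_2$ with $|V(G_1)\cap V(G_2)|\le1$, note each $G_i$ has circumference $\le c$ and fewer vertices, and sum the bounds using $n_1+n_2\le n+1$. The remaining $2$-connected case is the technical heart: here one studies a longest path $P=v_0\cdots v_\ell$ and uses that every chord of $P$ spans at most $c-1$ of its edges (else a cycle longer than $c$ appears) together with the two internally disjoint ways the ends of $P$ are linked in $G$, in order to control the degrees along $P$ and bound $e(G)$ by $\frac{c(n-1)}{2}$. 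I expect this last step to be the main obstacle, since the rotation/extremal-path bookkeeping must be pushed far enough to extract the full factor $\frac c2$ rather than something weaker.

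For (C) I would invoke the Berge--Tutte formula $n-2\mu(G)=\max_{S\subseteq V(G)}\big(o(G-S)-|S|\big)$, where $o(H)$ is the number of odd components of $H$, and fix a set $S$ attaining the maximum; setting $s=|S|$ we have $o(G-S)=n-2k+s$, and since $G-S$ has $n-s$ vertices with at least that many components, $s\le k$. Counting edges inside $S$, between $S$ and its complement, and inside the components of $G-S$ (of orders $n_1,n_2,\dots$), we get
\[
e(G)\le\binom s2+s(n-s)+\sum_j\binom{n_j}{2},
\]
and $\sum_j\binom{n_j}{2}$ is largest when one component has order $2k-2s+1$ and the rest are single vertices, giving $e(G)\le f(s):=\binom s2+s(n-s)+\binom{2k-2s+1}{2}$ with $0\le s\le k$. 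A short computation shows the coefficient of $s^2$ in $f(s)$ equals $\frac32>0$, so $f$ is convex on $[0,k]$ and hence $f(s)\le\max\{f(0),f(k)\}=\max\big\{\binom{2k+1}{2},\binom k2+k(n-k)\big\}$, which is the asserted bound. Here the only real care needed is to confirm that this extremal component configuration is indeed the worst case and that the convexity leaves just the two endpoints $s=0$ and $s=k$ in play.
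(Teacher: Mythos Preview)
The paper does not give its own proof of Theorem~\ref{Thm:EG}; it is quoted as the classical 1959 result and used only as background. What the paper proves are the localized and weighted strengthenings (Theorems~\ref{Theorem:MT}, \ref{Th:Local-BBRS}, \ref{Th:Local-Matching}, \ref{Thm:Weight-MT}), and it remarks in the introduction that parts (A) and (B) of Theorem~\ref{Thm:EG} are recovered from Theorem~\ref{Theorem:MT} by taking $p(e)\le k-1$ (resp.\ $c(e)\le k$) in a $P_{k+1}$-free (resp.\ circumference-$\le k$) graph. So there is no in-paper proof to compare against line by line.

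On the merits of your proposal: parts (A) and (C) are correct and standard. The longest-path rotation plus endpoint-deletion induction for (A) is the textbook argument. For (C), the Berge--Tutte route with the convex function $f(s)=\binom{s}{2}+s(n-s)+\binom{2k-2s+1}{2}$ on $[0,k]$ is clean; the only point you leave slightly soft is that the configuration with one odd block of size $2k-2s+1$ and singletons elsewhere really maximizes $\sum_j\binom{n_j}{2}$, but this is immediate once you note that the deficiency condition forces at least $n-2k+s$ odd parts and convexity then concentrates mass into a single part.

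Part (B), however, is not proved. You correctly reduce to the $2$-connected case, but then stop, saying only that you ``expect this last step to be the main obstacle.'' That step \emph{is} the theorem: the bound $e(G)\le\frac{c(n-1)}{2}$ for $2$-connected $G$ with circumference $c$ is the substantive content of Erd\H{o}s--Gallai for cycles, and it requires a genuine argument (e.g.\ the original Erd\H{o}s--Gallai proof, Woodall's proof, or Kopylov's deletion method). The sketch you give --- controlling chords of a longest path and exploiting two internally disjoint links between its endpoints --- is in the right direction but is not yet a proof. If you want a route closer in spirit to this paper, observe that the Bondy--Fan weighted-cycle theorem (Theorem~\ref{Thm:Bondy-Fan1991}(B) here) with unit weights gives (B) in one line for $2$-edge-connected graphs; but of course that simply relocates the difficulty into the cited result.
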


Balister, Bollob\'as, Riordan, and Schelp
\cite{BBRS2003} generalized Theorem \ref{Thm:EG}(A) as follows.
\begin{thm}[Balister-Bollob\'as-Riordan-Schelp \cite{BBRS2003}]\label{Thm:BBRS2003}
For every graph $G$ we have
$$
e(G)\leq \frac{1}{2}\sum_{v\in V(G)}\ell_G(x),
$$
where $\ell_{G}(x)$ is the length of a longest path starting
at $x\in V(G)$, with equality if and only if every component of G is a complete graph.
\end{thm}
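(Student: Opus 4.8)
The plan is to prove the statement by induction on $n=|V(G)|$ in the sharper form "$e(G)\le\frac12\sum_v\ell_G(v)$, with equality iff every component of $G$ is complete''. Since both sides add over the connected components of $G$ (and $\ell_G(v)$ depends only on the component of $v$), and ``every component complete'' is a per-component condition, it suffices to prove this for connected $G$. The case $n=1$ is trivial, and if $G=K_n$ then every vertex starts a Hamilton path, so $\ell_G(v)=n-1$ for all $v$ and $\frac12\sum_v\ell_G(v)=\binom n2=e(K_n)$. Hence the real work is to show: if $G$ is connected, not complete, and $n\ge 3$, then $e(G)<\frac12\sum_v\ell_G(v)$ strictly.

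So I would fix a longest path $P=v_0v_1\cdots v_p$ of $G$, so that $p=\max_v\ell_G(v)=\ell_G(v_0)$, and put $d=d_G(v_0)$. Two easy facts come first: (i) all neighbours of $v_0$ lie on $P$ (else $P$ extends), so $d\le p$; and (ii) $v_0$ is not a cut vertex — if it were, a component of $G-v_0$ avoiding $P$ would contain a neighbour $x$ of $v_0$, and $xv_0v_1\cdots v_p$ would be longer than $P$ — hence $G-v_0$ is connected. If $G-v_0=K_{n-1}$, then $G$ is $K_{n-1}$ with one extra vertex $v_0$ of degree $d\le n-2$, and since $K_{n-1}$ is Hamilton-connected a short direct computation gives $\ell_G(u)=n-1$ for every $u$ when $d\ge 2$ (and $\ell_G(u)=n-1$ for all $u$ but one, the exception having $\ell_G(u)=n-2$, when $d=1$); in either case $\frac12\sum_v\ell_G(v)>\binom{n-1}{2}+d=e(G)$. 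Otherwise $G-v_0$ is connected and not complete, so the induction hypothesis gives the strict bound $\sum_{v\ne v_0}\ell_{G-v_0}(v)\ge 2e(G-v_0)+1=2e(G)-2d+1$.

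To finish this remaining case I would invoke the Key Lemma: if $v_0$ is an endpoint of a longest path of $G$, then $\sum_{v\in V(G)}\ell_G(v)-\sum_{v\ne v_0}\ell_{G-v_0}(v)\ge 2d_G(v_0)$. Granting it, $\sum_v\ell_G(v)\ge(2e(G)-2d+1)+2d=2e(G)+1$, strictly, which completes the induction and with it the equality characterization (among connected graphs, $K_n$ is the only one attaining equality).

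The Key Lemma is the heart of the matter and the step I expect to be the main obstacle. Writing $\Delta=\sum_{v\ne v_0}(\ell_G(v)-\ell_{G-v_0}(v))\ge0$, the lemma says $p+\Delta\ge 2d$; since $p\ge d$ this is only non-trivial when $d>p/2$, where $N(v_0)$ occupies more than half of $\{v_1,\dots,v_p\}$ and hence $|\{i:v_{i-1},v_i\in N(v_0)\}|\ge 2d-p-1$. The plan there is a P\'osa-type rotation argument: for each chord $v_0v_i$ (with $i\ge 2$) the rotated path $v_{i-1}v_{i-2}\cdots v_1v_0v_iv_{i+1}\cdots v_p$ is again a longest path of $G$, so each $v_{i-1}$ starts a longest path; one then argues that deleting $v_0$ strictly shortens a longest path out of at least $2d-p$ of the vertices among $\{v_{i-1}:i\in J,\ i\ge 2\}\cup\{v_p\}$, giving $\Delta\ge 2d-p$. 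The delicate bookkeeping is certifying exactly which of these rotated longest paths fail to survive the removal of $v_0$; the extremal instance is $G=K_n$, where $p=d=n-1$ and $\Delta=n-1=2d-p$ exactly. (For the inequality alone there is a more global alternative: by the Small Path Double Cover theorem, $E(G)$ is covered exactly twice by at most $n$ paths $Q$, and $|E(Q)|\le\ell_G(x)$ for any endpoint $x$ of $Q$, so an injective assignment of each $Q$ to one of its endpoints would give $2e(G)=\sum_Q|E(Q)|\le\sum_v\ell_G(v)$ — though arranging that the assignment is injective is itself a non-trivial point, and this route does not see the equality cases.)
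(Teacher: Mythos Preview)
Your inductive scheme differs from how the paper handles Theorem~\ref{Thm:BBRS2003}. The paper does not prove it directly at all: in Remark~\ref{Rem:Theorem-NingMsc-imply-ThmBBRS} it derives the inequality by summing the per-vertex bound $\ell_G(v)\ge(2e(G)-d(v))/(n-1)$ of Theorem~\ref{Thm:Ning-MSC} over all $v\in V(G)$, and that bound in turn is deduced from the localized Theorem~\ref{Th:Local-BBRS}. The proof of Theorem~\ref{Th:Local-BBRS} is also an induction on $n$, but with a different deletion step---it removes an entire ``tail'' $H=G[\{v_i,\dots,v_p\}]$ shown to be clique-like, not a single endpoint---and it never needs to compare $\ell_G$ with $\ell_{G-v_0}$ across all vertices simultaneously.

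The substantive gap in your proposal is the Key Lemma, which you state but do not prove. Your sketch asserts that ``deleting $v_0$ strictly shortens a longest path out of at least $2d-p$ of the vertices among $\{v_{i-1}:i\in J,\ i\ge2\}\cup\{v_p\}$'', but nothing in the rotation argument establishes this. A rotated longest path from $v_{i-1}$ does pass through $v_0$, yet $v_{i-1}$ may well possess \emph{another} path of length $p$ in $G-v_0$---for instance one that detours through vertices off $P$ when $n>p+1$---and you give no mechanism to bound how many of the $d$ rotated endpoints can behave this way. (When $n=p+1$ the lemma is easy, since $\ell_{G-v_0}(v)\le p-1$ automatically and hence $\Delta\ge d\ge 2d-p$; the difficulty lies entirely in the non-Hamiltonian case.) This is precisely the ``delicate bookkeeping'' you flag, and without it the induction does not close.

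Your parenthetical SPDC remark is actually closer to a complete proof of the inequality than you suggest. Li's theorem, as cited in the paper, in fact yields a \emph{perfect} path double cover, one in which every vertex is the endpoint of exactly two paths; with this the injectivity worry disappears and
\[
2e(G)=\sum_{Q}|E(Q)|\le\tfrac12\sum_{Q}\bigl(\ell_G(u_Q)+\ell_G(w_Q)\bigr)=\sum_{v}\ell_G(v)
\]
follows at once. This is the same engine the paper uses for Theorem~\ref{Thm:Weight-MT}, though, as you note, it does not by itself yield the equality characterisation.
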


Our point of departure is the following theorems recently proved by Malec and Tompkins \cite{MT2023} and
Zhao and Zhang \cite{ZZ25}, respectively.

\begin{thm}
(A) [Malec-Tompkins \cite{MT2023}]\label{Theorem:MT}
Let $G$ be an $n$-vertex graph. Then
$$\sum_{e\in E(G)}\frac{1}{p(e)}\leq \frac{n}{2},$$
where $p(e)$ denotes the length of a longest path containing $e$.\\
(B) [Zhao-Zhang \cite{ZZ25}]\label{Theorem:Zhang}
Let $G$ be an $n$-vertex graph. Then
$$\sum_{e\in E(G)}\frac{1}{c(e)}\leq \frac{n-1}{2},$$
where $c(e)$ denotes the length of a longest cycle
containing $e$ if $e$ belongs to some cycle in $G$;
and 2 if $e$ is not contained in some cycle.
\end{thm}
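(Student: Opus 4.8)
The plan is to prove both inequalities by induction on $|V(G)|$, reducing first to connected graphs --- both sides are additive over components --- and, for part~(B), further to $2$-connected graphs.

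For part~(A), assume $G$ is connected. If $G$ has a Hamilton cycle, then every edge of $G$ lies on a Hamilton path (if the edge is on the Hamilton cycle, delete a different cycle-edge; otherwise reroute around the chord in the usual way), so $p(e)=n-1$ for all $e$ and $\sum_e 1/p(e)=e(G)/(n-1)\le\binom n2/(n-1)=n/2$. Otherwise (the cases $n\le2$ being trivial) fix a longest path $P=v_0v_1\cdots v_\ell$ with $\ell=\ell(G)\ge 2$. All neighbours of $v_0$ and of $v_\ell$ lie on $P$, and the usual rotation argument gives $v_0v_\ell\notin E(G)$ and $d(v_0)+d(v_\ell)\le\ell$ (otherwise $V(P)$ spans a cycle, which --- as $G$ is connected --- is a Hamilton cycle or extends to a path longer than $P$). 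Moreover every edge meeting $\{v_0,v_\ell\}$ lies on a path of length $\ell$: for $v_0v_i$ with $i\ge 2$ use $v_{i-1}\cdots v_1 v_0 v_i v_{i+1}\cdots v_\ell$, and $v_0v_1$ lies on $P$; symmetrically at $v_\ell$. Hence these $d(v_0)+d(v_\ell)\le\ell$ edges contribute at most $1$, and for every other edge $p_G(e)\ge p_{G-v_0-v_\ell}(e)$, so applying induction to $G-v_0-v_\ell$ gives
$$\sum_{e\in E(G)}\frac1{p(e)}\le 1+\sum_{e\in E(G-v_0-v_\ell)}\frac1{p_{G-v_0-v_\ell}(e)}\le 1+\frac{n-2}{2}=\frac n2.$$
Since $p(e)\le\ell(G)$ for all $e$, this in particular yields $e(G)/\ell(G)\le n/2$, recovering Theorem~\ref{Thm:EG}(A).

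For part~(B) I would first pass to the block decomposition of $G$: each edge lies in a unique block, every cycle through it is contained in that block, a bridge-block contributes $\tfrac12$, and the orders $n_1,\dots,n_t$ of the blocks of a connected graph satisfy $\sum_i(n_i-1)=n-1$. Thus it suffices to prove $\sum_{e\in E(B)}1/c_B(e)\le\tfrac{|V(B)|-1}{2}$ for every $2$-connected graph $B$; summing over blocks then gives the bound for $G$, the extremal configurations being tree-like gluings of complete graphs. For a $2$-connected $B$ on $n\ge3$ vertices I would again induct on $n$: fix a longest cycle $C$ (its edges contribute exactly $1$), delete a vertex $v\in V(C)$, and apply the block reduction and induction to the connected graph $B-v$; for the surviving edges $c_B(e)\ge c_{B-v}(e)$, so they contribute at most $\tfrac{n-2}{2}$, and it remains to bound the contribution of the edges at $v$ by $\tfrac12$.

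The hard part will be exactly this last estimate. The naive bound is false: in $K_n$ every vertex $v$ has $\sum_{e\ni v}1/c(e)=(n-1)/n>\tfrac12$, so no single $v$ works unless one also exploits that deleting $v$ strictly decreases $c_B(e)$ for many surviving edges, by a total that exactly compensates the excess (in $K_n$ the two quantities balance). I would try to make this bookkeeping tractable by choosing $v\in V(C)$ so that the ``bridges'' of $C$ meeting $v$ are as small as possible (using again that $C$ is longest), which simultaneously forces the edges at $v$ onto long cycles and controls the drop in the other $c_B(e)$; an alternative is a global double-counting in which one fixes a longest cycle $D_e$ through each non-bridge edge $e$, spreads the weight $1/c(e)$ uniformly over the $c(e)$ vertices of $D_e$, and bounds the total vertex load rather than the (generally unbounded) per-vertex load. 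Closing the induction with one of these accountings is the crux of part~(B).
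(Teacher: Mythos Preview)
Your argument for part~(A) is correct and takes a genuinely different route from the paper. The paper assigns the weight $w(e)=1/p(e)$ and applies the Frieze--McDiarmid--Reed theorem (every weighted $n$-vertex graph contains a path of weight at least $2w(G)/n$); since any path $P$ satisfies $\sum_{e\in E(P)}1/p(e)\le\sum_{e\in E(P)}1/|E(P)|=1$, the inequality follows in one line. Your induction, by contrast, is entirely self-contained: the Hamilton-cycle case and the standard rotation bound $d(v_0)+d(v_\ell)\le\ell$ at the ends of a longest path combine to strip off two vertices at cost at most~$1$. What you gain is an elementary proof with no black box; what the paper gains is brevity and a template that immediately generalises to the weighted statement (their Theorem~\ref{Thm:Weight-MT}).

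For part~(B), however, your proof has a genuine gap that you yourself flag but do not close. The block decomposition and the identity $\sum_i(n_i-1)=n-1$ are fine, and reducing to $2$-connected blocks is correct. But the inductive step ``delete $v\in V(C)$ and bound the edges at $v$ by $\tfrac12$'' fails, as your $K_n$ example shows, and neither of your proposed repairs is carried out. The ``choose $v$ carefully and track the drop in $c_B(e)$'' idea would require controlling, for a general $2$-connected graph, exactly the compensation that happens to balance in $K_n$; you give no mechanism for this. The double-counting suggestion is circular as stated: spreading $1/c(e)$ uniformly over the $c(e)$ vertices of $D_e$ gives total vertex load $\sum_e c(e)\cdot\tfrac{1}{c(e)^2}=\sum_e 1/c(e)$, which is precisely the quantity to be bounded. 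The paper avoids this difficulty altogether by the same weighting trick as in~(A): set $w(e)=1/c(e)$ and apply the Bondy--Fan theorem (a $2$-edge-connected weighted graph has a cycle of weight at least $2w(G)/(n-1)$); since every cycle has weight at most~$1$, the $2$-edge-connected case is immediate, and the bridge decomposition finishes the rest.
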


Rather than forbidding certain substructures in a graph, they defined
a weighted function that can be used to arbitrary structures. For a family of
graphs $\mathcal{H}:=\{H_k:1\leq n\}$ (usually they are of the same kind of graphs, for example,
they are all paths, cycles, or other graphs), define $f_{\mathcal{H}}(e)$ to be the largest size of $H_i\in \mathcal{H}$
such that $e$ is an edge of some subgraph of $G$ isomorphic to $H_i$. Then they aim to control the upper bound of some function, such as $\sum_{e\in E(G)}g(f_{\mathcal{H}}(e))$, where $g(\cdot)$ is a function. This kind of extremal problem is called ``the localized version of extremal problems" in \cite{MT2023}. Next, we give an explanation why the Malec-Tompkins Theorem implies Erd\H{o}s-Gallai Theorem on paths.
Theorem \ref{Thm:EG}(A) is easily re-obtained by taking a $P_k$-free graph $G$
and using $p(e)\leq k-1$. Similar ideas can help us to recover Theorem \ref{Thm:EG}(B) by using the
Zhao-Zhang Theorem.
For more results in this
direction, we refer the reader to \cite{KN24,ZZ25-2,LN-arxiv23} and references therein.

In this paper, we first present localized versions of Balister-Bollob\'as-Riordan-Schelp Theorem
on paths and Erd\H{o}s-Gallai Theorem on matchings. We shall show that Theorem \ref{Th:Local-BBRS} implies both Theorem \ref{Thm:BBRS2003}
and Theorem \ref{Theorem:MT}(A).

\begin{thm}\label{Th:Local-BBRS} 
Let $G$ be a connected $n$-vertex graph and $v\in V(G)$. For any edge $e\in E(G)$,
denote by $p_v(e)$ the length of a longest path starting at $v$ and going through $e$
and by $E_v(G)=\{e: e\in E(G), e\mbox{~is~incident~to~v}\}$.
Then, we have
$$\frac{1}{2}\sum_{e\in E_v(G)}\frac{1}{p_v(e)}+\sum_{e\notin E_v(G)}\frac{1}{p_v(e)}\leq \frac{n-1}{2}.$$
Equality holds if and only if $G$ consists of (at least two) cliques sharing $v$ as a common vertex when $v$ is a cut-vertex; and $G$ is a clique when $v$ is not a cut-vertex.
\end{thm}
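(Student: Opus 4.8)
The plan is to argue by induction on $n=|V(G)|$, after two reductions. First, since an edge incident to $v$ has exactly one endpoint different from $v$ while every other edge has two, one has the identity
$$\frac12\sum_{e\in E_v(G)}\frac{1}{p_v(e)}+\sum_{e\notin E_v(G)}\frac{1}{p_v(e)}=\frac12\sum_{u\in V(G)\setminus\{v\}}S(u),\qquad S(u):=\sum_{e\ni u}\frac{1}{p_v(e)},$$
so the assertion is equivalent to $\sum_{u\ne v}S(u)\le n-1$. Second, if $v$ is a cut-vertex and $C_1,\dots,C_k$ ($k\ge 2$) are the components of $G-v$, put $G_i=G[V(C_i)\cup\{v\}]$; a path of $G$ starting at $v$ cannot leave the piece $G_i$ it first enters, so $p_v(e)$ is unchanged on passing from $G$ to $G_i$, while $E(G)$ and $E_v(G)$ split accordingly. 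Thus the left-hand side for $(G,v)$ is the sum of those for the $(G_i,v)$; each $G_i$ has fewer than $n$ vertices and has $v$ as a non-cut-vertex, and $\sum_i(|V(G_i)|-1)=n-1$, so the inductive hypothesis applies, and equality forces each $G_i$ to be complete, i.e.\ $G$ a union of $k\ge2$ cliques sharing $v$. The cases $n\le 2$ are immediate.

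Assume now that $G-v$ is connected. I would delete the endpoint $u=v_\ell$ of a longest path $P=v_0v_1\cdots v_\ell$ starting at $v=v_0$; here $\ell\ge2$, since $\ell=1$ would force $G$ to be a star and $v$ a cut-vertex. Every neighbour of $v_\ell$ lies on $P$ (otherwise $P$ extends), so $G-v_\ell$ is spanned by $P-v_\ell$ together with the paths joining the remaining vertices to it and is therefore connected; moreover the rerouting $v_0\cdots v_iv_\ell v_{\ell-1}\cdots v_{i+1}$ shows $p_v(v_iv_\ell)=\ell=\ell_G(v)$, whence $S(v_\ell)=\deg(v_\ell)/\ell$. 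It thus suffices to establish the one-step estimate $F(G)\le F(G-v_\ell)+\tfrac12$, where $F$ denotes the left-hand side of the theorem (for the fixed vertex $v$); the inductive hypothesis, in whichever of its two forms applies to $G-v_\ell$, then finishes the proof. Expanding $F(G)-F(G-v_\ell)$ and using $p_v^{G-v_\ell}(e)\le p_v^{G}(e)$ for $e\in E(G-v_\ell)$ reduces this estimate to
$$\frac{2\deg(v_\ell)-[\,vv_\ell\in E(G)\,]}{\ell}\ \le\ 1+D,\qquad D:=\sum_{e\in E(G-v_\ell)}c(e)\Bigl(\frac{1}{p_v^{G-v_\ell}(e)}-\frac{1}{p_v^{G}(e)}\Bigr)\ge 0,$$
where $[\,\cdot\,]\in\{0,1\}$ is the indicator and $c(e)=1$ if $v\in e$ while $c(e)=2$ otherwise.

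If $2\deg(v_\ell)-[\,vv_\ell\in E(G)\,]\le\ell$ — in particular whenever $v_\ell$ has small degree, e.g.\ is a leaf — the estimate is immediate because $D\ge0$, and a short check shows equality is then impossible unless $G$ is already complete. The real difficulty is the case $2\deg(v_\ell)>\ell$, where $D\ge0$ no longer suffices and one must show that deleting $v_\ell$ genuinely shortens many longest paths. Here I would run the P\'osa rotation–extension argument: since $v_\ell$ has more than $\ell/2$ neighbours on $P$, rotating $P$ at them yields many further longest $v$-paths, whose endpoints are again vertices of degree $>\ell/2$ lying on $P$ with all their neighbours on $P$; iterating, $G[V(P)]$ is forced to be dense and highly path-connected, so that no vertex lies off $P$ (an off-path vertex attached to such a core would produce a $v$-path longer than $P$). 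Hence $n=\ell+1$ and $\ell_{G-v_\ell}(v)=\ell-1$, so every edge of $G-v_\ell$ lying on a Hamilton $v$-path of $G$ loses at least $1$ from its $p_v$-value; a further dose of the same rotation machinery — showing that enough edges of $G-v_\ell$ lie on Hamilton $v$-paths of $G$ — then bounds $D$ below by exactly the deficit $\bigl(2\deg(v_\ell)-[\,vv_\ell\in E(G)\,]-\ell\bigr)/\ell$, with equality only when $G-v_\ell$, hence $G$, is complete. I expect this rotation step, with its degree bookkeeping, together with tracking how the equality characterisation is inherited across the cut/non-cut split, to be the delicate part; everything else is routine.
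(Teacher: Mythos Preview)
Your overall architecture---induction on $n$, the cut-vertex reduction, and the easy case $2\deg(v_\ell)-[vv_\ell\in E]\le\ell$ via $D\ge 0$---matches the paper's proof exactly. The divergence, and the gap, is in the hard case.

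Your key claim there is that P\'osa rotation forces $V(P)=V(G)$ (``no vertex lies off $P$''). This is false. Take $v=v_0$ of degree $1$ adjacent to $v_1$, let $\{v_1,\dots,v_h\}$ induce a clique ($h\ge 3$), and attach one further pendant $w$ to $v_1$. Then $G-v$ is connected, every longest $v$-path has the form $v_0v_1\cdots$ through the clique (length $\ell=h$), its endpoint $v_\ell$ has degree $h-1>\ell/2$, and yet $w\notin V(P)$. Your parenthetical---an off-path vertex attached to a rotation endpoint would extend $P$---is correct, but $w$ is attached to $v_1$, which is \emph{not} a rotation endpoint (no longest $v$-path can terminate at $v_1$ since $v_0$ has degree $1$). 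So rotation only pins down a ``core'' $\{v_{i+1},\dots,v_\ell\}\subsetneq V(P)$ whose neighbours lie in $\{v_i,\dots,v_\ell\}$; vertices hanging off the entry vertex $v_i$ (or earlier) escape your argument. The subsequent claim $\ell_{G-v_\ell}(v)=\ell-1$ then has no justification either, and your lower bound on $D$ is left unsupported.

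The paper handles precisely this by not deleting a single vertex. It takes the longest $v$-path $P=v_0\cdots v_p$, lets $v_i$ be the earliest neighbour of $v_p$ on $P$ (minimised over all longest $v$-paths), sets $H=G[\{v_i,\dots,v_p\}]$, and proves by a P\'osa-type induction that $H$ has a Hamilton $v_i$--$v_j$ path for every $j>i$, whence every such $v_j$ is a longest-path endpoint, $d(v_j)>|V(H)|/2$, and $N(v_j)\subseteq V(H)$. Consequently every edge of $H$ has $p_v(e)=\ell$. If $i=0$ one gets $G=H$ and a direct one-line computation finishes. If $i\ge 1$, the vertex $v_i$ is a cut-vertex separating $H$ from the rest; one deletes the whole block $V(H)\setminus\{v_i\}$, applies induction to the remainder $G'$, and adds back $e(H)/\ell\le\binom{h}{2}/\ell\le (h-1)/2$. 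The chunk deletion is exactly what absorbs the off-path vertices your single-vertex deletion cannot see.
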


\begin{thm}\label{Th:Local-Matching}
  Let $G$ be an $n$-vertex graph with matching number $\mu$. For each $e\in E(G)$, denote by $\mu(e)$ the size of a maximum matching containing $e$.
 
  (1) If $n=2\mu$ (i.e., $G$ has a perfect matching), then 
  $$\sum_{e\in E(G)}\frac{1}{\mu(e)}\leq n-1.$$ Equality holds if and only if $G$ is a complete graph.
  
  (2) If $n\geq 2\mu+1$, then $$\sum_{e\in E(G)}\frac{1}{\mu(e)}\leq\left\{\begin{array}{ll}
    \max\{3,n-1\},  & \mu=1;\\
    \max\{2\mu+1,n-\mu/2\}, & \mu\geq 2.

\end{array}\right.$$
For $\mu=1$, equality holds if and only if $n=3$ and $G\cong K_3$, or $n=4$ and $G\cong K_3\cup K_1$, or $n\geq 4$ and $G\cong K_{1,n-1}$; for $\mu\geq 2$, equality holds if and only if $n\leq 5\mu/2+1$ and $G\cong K_{2\mu+1}\cup\overline{K_{n-2\mu-1}}$, or $n\geq 5\mu/2+1$ and $G\cong K_{\mu}\vee\overline{K_{n-\mu}}$.
\end{thm}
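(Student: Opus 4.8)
The plan is first to turn $\mu(e)$ into an essentially two‑valued function. A maximum matching through an edge $e=uv$ consists of $uv$ together with a maximum matching of $G-u-v$, so $\mu(e)=1+\mu(G-u-v)$; since removing two vertices decreases the matching number by at most two, this forces $\mu(e)\in\{\mu-1,\mu\}$ when $\mu\ge2$ (and $\mu(e)\equiv1$ when $\mu=1$). Writing $b(G)$ for the number of \emph{non‑extendable} edges, i.e.\ those contained in no maximum matching (equivalently $\mu(e)=\mu-1$), this yields, for $\mu\ge2$, the identity
\[
\sum_{e\in E(G)}\frac1{\mu(e)}=\frac{e(G)}{\mu}+\frac{b(G)}{\mu(\mu-1)} ,
\]
so the whole problem reduces to bounding $e(G)$ — where I would invoke Theorem~\ref{Thm:EG}(C) or trivial $\binom{\cdot}{2}$ estimates — together with the number $b(G)$ of non‑extendable edges.

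For part~(1), fix a perfect matching $M$ and let $\phi$ be the involution on $V(G)$ sending each vertex to its $M$‑partner. If $uv$ is non‑extendable then $uv\notin M$, so $u,v,\phi(u),\phi(v)$ are distinct, and $\phi(u)\phi(v)$ cannot be an edge — otherwise $(M\setminus\{u\phi(u),\,v\phi(v)\})\cup\{uv,\,\phi(u)\phi(v)\}$ would be a perfect matching through $uv$. Hence $uv\mapsto\{\phi(u),\phi(v)\}$ is an injection from the non‑extendable edges into the non‑edges of $G$ (apply $\phi$ to invert it), so $b(G)\le\binom{n}{2}-e(G)$. Substituting into the identity and using $\mu\ge2$,
\[
\sum_{e\in E(G)}\frac1{\mu(e)}\le\frac{(\mu-2)e(G)+\binom{n}{2}}{\mu(\mu-1)}\le\frac{(\mu-1)\binom{n}{2}}{\mu(\mu-1)}=n-1 ,
\]
and the equality case is read off from when all these estimates are simultaneously tight.

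For part~(2) I would use the Gallai--Edmonds decomposition $V(G)=D\cup A\cup C$, after first deleting any isolated vertices (which leaves $\sum1/\mu(e)$ unchanged and only weakens the target, as $\max\{2\mu+1,\,n-\mu/2\}$ is nondecreasing in $n$, and which is what makes the extremal families come out). Two points organize the rest. First, a non‑extendable edge has both ends outside $D$: a maximum matching missing an endpoint in $D$ can always be rerouted through the edge. Second, classify $E(G)$ into the five Gallai--Edmonds types and evaluate $\mu(e)$ on each: edges inside $A$ and edges between $A$ and $C$ are always non‑extendable; edges between $A$ and $D$ are extendable, using a Hall‑type property of the canonical barrier $A(G)$ on the bipartite ``$A$ versus components of $G[D]$'' graph; edges inside a component of $G[D]$ are extendable apart from controlled exceptions (every edge of a factor‑critical graph lies in a near‑perfect matching, and one bounds the exceptions by a matching‑type injection as in part~(1)); and for edges inside $C$ one has $\sum_{e\in E(G[C])}1/\mu(e)\le|C|-1$ straight from part~(1) applied to $G[C]$, since $\mu(e)$ computed in $G$ is at least its value in $G[C]$. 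Combining these with the estimates $e(G[A])\le\binom{|A|}{2}$, $e(A,C)\le|A||C|$, $e(A,D)\le|A||D|$, and a bound on $e(G[D])$ from one large factor‑critical component, I would obtain an explicit upper bound for $\sum1/\mu(e)$ as a function of $|A|$ and $|C|$ with $n,\mu$ fixed; this function is convex in $(|A|,|C|)$, so it attains its maximum at a corner of the feasible polytope. The relevant corners are $A=C=\varnothing$, forcing $G=K_{2\mu+1}\cup\overline{K_{n-2\mu-1}}$ with value $2\mu+1$, and $C=\varnothing$ with $|A|=\mu$, forcing $G=K_\mu\vee\overline{K_{n-\mu}}$ with value $n-\mu/2$; this yields the bound $\max\{2\mu+1,\,n-\mu/2\}$ and the equality cases, while $\mu=1$ is a short direct check on triangles and stars.

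The main obstacle I anticipate is the Gallai--Edmonds bookkeeping in part~(2): establishing precisely which edges in each class are extendable — the Hall argument for the $A$--$D$ edges and the factor‑criticality analysis of the within‑$D$ edges carry the real structural content — and then running the convexity optimization while respecting the parity and integrality constraints on $|A|$, $|C|$ and the component sizes of $G[D]$, which is exactly what makes the equality characterization delicate. Part~(1), by contrast, is clean via the injection above.
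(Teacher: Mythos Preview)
Your Part~(1) argument is essentially the paper's: both reduce to the identity $\sum_e 1/\mu(e)=e(G)/\mu+|F|/(\mu(\mu-1))$ and then use the involution $\pi$ to inject non-extendable edges into non-edges, giving $e(G)+|F|\le\binom{n}{2}$.

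For Part~(2) you take a genuinely different route. The paper does a case split on $n$: for $2\mu+1\le n\le(5\mu+1)/2$ it combines the crude estimate $|F|\le\mu(\mu-1)$ with a closure-based stability lemma (Lemma~\ref{Lem:Matching-Stab}, resting on Bondy--Chv\'atal) to force $G\subseteq K_{2\mu+1}$ once $e(G)>2\mu^2$; only for $n\ge 5\mu/2+1$ does it invoke Gallai--Edmonds, and there it works with a barrier $S$ (so effectively $C=\emptyset$), builds an auxiliary supergraph $G'$, and bounds the pieces of $F$ against the added edges. Your plan runs Gallai--Edmonds uniformly with the full $(D,A,C)$ decomposition and exploits the sharp observation that any edge meeting $D$ is extendable (take a maximum matching missing the $D$-endpoint and swap it in). This is cleaner than the paper's componentwise $F_i,A_i$ bookkeeping and, if it goes through, eliminates the need for the stability lemma. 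Note that your ``First'' point already makes all within-$D$ edges extendable, so the later hedge about ``controlled exceptions'' there is superfluous and slightly contradicts what you just proved.

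The real gap is the optimization step. Your upper bound $f(|A|,|C|)$ is \emph{not} jointly convex: already at $\mu=3$ the Hessian of the quadratic part has negative determinant, and along the hypotenuse $2|A|+|C|=2\mu$ the function is concave in $|A|$. So ``convex, hence maximum at a corner of the polytope'' fails as stated. In examples the maximum does still sit at $(0,0)$ or $(\mu,0)$, but you will need an honest argument---for instance, first maximize over $|C|$ for fixed $|A|$ using the (valid) separate convexity in $|C|$, then handle the two resulting one-variable boundary problems directly; the paper's analogue is the reduction to $G''$ followed by an explicit one-parameter maximization in $s$. Finally, be careful with the equality analysis at small $\mu$: in your Part~(1) chain the coefficient $\mu-2$ vanishes when $\mu=2$, so tightness no longer forces $e(G)=\binom{n}{2}$, and cases like $K_4$ minus an edge must be examined by hand.
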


In this paper, we prove a weighted localized Tur\'an-type
theorem on paths, which generalizes  Theorem \ref{Thm:EG}(A) and Theorem \ref{Theorem:MT}(A) on paths. We shall present very
short proofs of Malec-Tompkins Theorem and Zhao-Zhang Theorem, respectively.
Note that Zhao-Zhang's proof of Theorem \ref{Theorem:Zhang}(B) is very
complicated and has more than 18 pages.
(The interested reader can directly go to Section \ref{Sec:2} to read our proofs.)

\begin{thm}\label{Thm:Weight-MT}
Let $G$ be a weighted $n$-vertex graph. Then
$$
\sum_{e\in E(G)}\frac{w(e)}{w(p(e))}\leq \frac{n}{2},
$$
where $w(p(e))$ denotes the maximum weight of a path
containing the edge $e$.
\end{thm}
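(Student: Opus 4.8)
The plan is to derive the inequality from the Small Path Double Cover Theorem conjectured by Bondy and confirmed by Hao Li: every simple graph $H$ without isolated vertices, on $h$ vertices, admits a family $\mathcal{P}=\{P_1,\dots,P_t\}$ of $t\le h$ paths, each a subgraph of $H$, such that every edge of $H$ lies on exactly two members of $\mathcal{P}$. First I would dispose of isolated vertices of $G$: they lie on no edge and hence do not affect the left-hand side, while deleting them only decreases $n$; so it suffices to prove the bound for the graph $G'$ obtained by deleting all isolated vertices (say $|V(G')|=n'\le n$), with $n'$ in place of $n$. I would also use that the edge weights are positive, as is implicit in the statement -- otherwise $w(p(e))$ need not be positive -- so that $w(P):=\sum_{f\in E(P)}w(f)>0$ for every path $P$ with at least one edge and every fraction below is well defined; note that each edge $e$ together with its two endpoints is a path through $e$, whence $w(p(e))\ge w(e)>0$ and $w(p(e))$ is always defined.

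Next I would apply the theorem to $G'$, obtaining $\mathcal{P}=\{P_1,\dots,P_t\}$ with $t\le n'$ (trivial, edgeless paths may be discarded without changing anything). The one substantive step is a local comparison: for every $i$ and every edge $e\in E(P_i)$, the path $P_i$ is itself a path of $G$ through $e$, so by the definition of $w(p(e))$ we have $w(p(e))\ge w(P_i)$. Consequently, for each fixed $i$,
$$
\sum_{e\in E(P_i)}\frac{w(e)}{w(p(e))}\;\le\;\sum_{e\in E(P_i)}\frac{w(e)}{w(P_i)}\;=\;\frac{1}{w(P_i)}\sum_{e\in E(P_i)}w(e)\;=\;1,
$$
so each path in the cover contributes at most $1$, irrespective of how its total weight is spread among its edges; this telescoping is exactly why the weighted refinement costs nothing beyond the unweighted case.

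Finally, since $\mathcal{P}$ covers every edge of $G'$ exactly twice, summing the last inequality over $i$ and counting edge--path incidences gives
$$
2\sum_{e\in E(G')}\frac{w(e)}{w(p(e))}\;=\;\sum_{i=1}^{t}\;\sum_{e\in E(P_i)}\frac{w(e)}{w(p(e))}\;\le\;\sum_{i=1}^{t}1\;=\;t\;\le\;n'\;\le\;n,
$$
and dividing by $2$ yields the claim. I expect the whole argument to be this short; the only delicate points are bookkeeping -- handling trivial paths in the double cover, invoking the implicit positivity of $w$, and reducing to a graph without isolated vertices so that the Bondy--Li hypothesis is met -- so I do not foresee a genuine obstacle beyond recognizing that the Small Path Double Cover Theorem is the right engine. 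As a sanity check, taking $w\equiv 1$ recovers Theorem~\ref{Theorem:MT}(A), and restricting further to $P_k$-free graphs recovers Theorem~\ref{Thm:EG}(A).
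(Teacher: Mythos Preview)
Your proposal is correct and follows essentially the same approach as the paper: apply H.~Li's Small Path Double Cover Theorem to obtain at most $n$ paths covering each edge twice, use $w(p(e))\ge w(P_i)$ for $e\in E(P_i)$ to bound each path's contribution by~$1$, and sum. Your extra bookkeeping about isolated vertices, trivial paths, and positivity of weights is more careful than the paper's own write-up, but the argument is the same.
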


An example explaining our strategy is presented below.
\begin{itemize}
\item Bondy's Small Path Double Cover Conjecture $\Longrightarrow$ Theorem \ref{Thm:Weight-MT} (our new one) $\Longrightarrow$ Theorem \ref{Thm:FMR}(A) (weighted version) $\Longrightarrow$ Theorem \ref{Theorem:MT}(A) (localized version).
\end{itemize} 

Motivated by Luo's work \cite{L18}, Ning and Peng \cite{NP2020} proved the following extension of Erd\H{o}s-Gallai Theorem on paths:
Let $G$ be a graph. For each integer $1\leq s\leq \omega(G)$, $G$ contains a path of
length at least $\frac{(s+1)n_{s+1}(G)}{n_s(G)}+s-1$, where $\omega(G)$ is clique number
of $G$, and $n_s(G)$ is the number of $s$-cliques in $G$. Motivated by \cite{L18,NP2020} , in this paper
we finally present a generalized Tur\'an-style generalization of the Malec-Tompkins Theorem,
which is a mixture of ideas from generalized Tur\'an-type theorem and local Tur\'an-type
theorem. The cases $s=2$ of the following theorems reduce to Theorem \ref{Theorem:MT}(A)
and Proposition 1 in \cite{MT2023}, respectively.

\begin{thm}\label{Thm:Local-generalized-Turan}
Let $G$ be a graph. Then\\
(i)
$$\sum_{S\in N_s(G)}\frac{1}{p(S)-s+2}\leq \frac{n_{s-1}}{s},$$
where $p(S)$ denotes the maximum length of a path $P$ such that $S\subset V(P)$
and the vertices of $S$ appear on $P$ in consecutive order.\\
(ii) We have
$$\sum_{K\in N_s(G)}\frac{1}{s(K)-s+2}\leq \frac{n_{s-1}}{s},$$
where $s(K)$ denotes the maximum size of a star $S$ (in $G$) such that a clique $K\subset V(S)$.
\end{thm}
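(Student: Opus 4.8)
The plan is to treat the two parts by different mechanisms: part~(ii) reduces to a purely local estimate, while part~(i) requires a global path-covering argument.

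For part~(ii) I would double count over $(s-1)$-subcliques. Every $s$-clique $K$ contains exactly $s$ subsets of size $s-1$, each of which is a clique, so
\[
s\sum_{K\in N_s(G)}\frac{1}{s(K)-s+2}=\sum_{A\in N_{s-1}(G)}\ \sum_{\substack{K\supseteq A\\ |K|=s}}\frac{1}{s(K)-s+2},
\]
and it suffices to prove $\sum_{K\supseteq A,\ |K|=s}\frac{1}{s(K)-s+2}\le 1$ for each fixed $(s-1)$-clique $A$. Let $N(A)=\bigcap_{a\in A}N(a)$ denote the common neighbourhood of $A$; it is disjoint from $A$, and the $s$-cliques containing $A$ are exactly the sets $A\cup\{v\}$ with $v\in N(A)$, so there are precisely $|N(A)|$ of them. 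The key point is that $s(K)$ is automatically large for such $K$: any $a\in A$ serves as the centre of a star containing $K$, so $s(K)\ge d(a)$; and since $N(A)$ and $A\setminus\{a\}$ are disjoint subsets of $N(a)$, we get $d(a)\ge|N(A)|+(s-2)$, hence $s(K)-s+2\ge|N(A)|$. Summing $|N(A)|$ terms each at most $1/|N(A)|$ gives the local bound, and summing over all $A$ and dividing by $s$ proves part~(ii). (No such local bound holds for part~(i): already for $s=2$ and a star $K_{1,n-1}$ with centre $a$, the edges through $a$ contribute $(n-1)/2$, far more than $1$.)

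For part~(i) I would route through a clique analogue of Bondy's Small Path Double Cover Conjecture. Say a path $Q$ \emph{captures} an $s$-clique $S$ if the $s$ vertices of $S$ occur consecutively along $Q$. If $Q$ captures $S$ then $p(S)\ge|Q|$; and $Q$ captures at most one $s$-clique per window of $s$ consecutive vertices, hence at most $|Q|-s+2$ cliques. So, for every path $Q$,
\[
\sum_{S\ \text{captured by}\ Q}\frac{1}{p(S)-s+2}\ \le\ \frac{|Q|-s+2}{|Q|-s+2}\ =\ 1 .
\]
Consequently, if $G$ admits a family $\mathcal P$ of paths in which every $s$-clique is captured by at least $s$ of the paths and $|\mathcal P|\le n_{s-1}(G)$, then
\[
s\sum_{S\in N_s(G)}\frac{1}{p(S)-s+2}\ \le\ \sum_{Q\in\mathcal P}\ \sum_{S\ \text{captured by}\ Q}\frac{1}{p(S)-s+2}\ \le\ |\mathcal P|\ \le\ n_{s-1}(G),
\]
which is exactly the inequality of part~(i).

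So everything reduces to the lemma: \emph{every graph $G$ has a family of at most $n_{s-1}(G)$ paths in which each $s$-clique of $G$ is captured at least $s$ times}. For $s=2$ this is implied by Hao Li's theorem (settling Bondy's conjecture) that every simple graph has a path double cover using at most $n$ paths, since there a $2$-clique is captured by a path exactly when it is an edge of that path. I would prove the lemma in general by adapting Li's inductive construction: add the edges of $G$ one at a time, maintaining such a path family together with a record of how many path-endpoints currently sit at each $(s-1)$-clique, and show that inserting an edge $uv$ --- which creates new $(s-1)$- and $s$-cliques only among $u$, $v$ and their common neighbours --- can be absorbed by rerouting and extending a bounded number of the paths running through the affected cliques, while the endpoint record grows by exactly the number of new $(s-1)$-cliques. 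Checking that the capture multiplicity stays $\ge s$ and the path count stays $\le n_{s-1}(G)$ --- equivalently, that the $(s-1)$-clique bookkeeping balances as cleanly as the vertex bookkeeping does in a perfect path double cover --- is the main obstacle; the complete graphs $K_m$, for which the inequality of part~(i) is tight and the lemma demands exactly $\binom{m}{s-1}$ paths, are the instances to keep in view throughout.
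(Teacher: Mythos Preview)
Your treatment of part~(ii) is correct and is in fact cleaner than the paper's. The paper handles both parts by the same induction on $s$, passing to the neighbourhood graphs $G_x=G[N(x)]$ and using $s(K,G)\ge s(K,G_x)+1$; you settle~(ii) in one stroke via the local estimate $s(K)-s+2\ge |N(A)|$ for every $(s-1)$-subclique $A\subseteq K$, which makes the inner sum over $K\supseteq A$ bounded by $1$. That is a genuine simplification.

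Part~(i), however, has a real gap. Everything up to and including the reduction to your ``clique SPDC lemma'' is fine, but the lemma itself --- that every graph admits a family of at most $n_{s-1}(G)$ paths in which each $s$-clique is captured at least $s$ times --- is left as a sketch. Adapting Li's edge-by-edge construction so that an $(s-1)$-clique endpoint ledger balances is not a routine extension: adding a single edge $uv$ may create one new $s$-clique for every $(s-2)$-clique in $N(u)\cap N(v)$, and Li's rerouting moves give no evident mechanism for manufacturing $s$ consecutive captures of each of these while keeping the path count under control. You yourself flag this step as ``the main obstacle'' and do not resolve it; as written, part~(i) rests on an unproved conjecture that is strictly stronger than Li's theorem.

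The paper bypasses this entirely. It inducts on $s$, taking Malec--Tompkins as the base case $s=2$, and for the step applies the $s$-case to each $G_x$ and sums over $x$. The key observation is that if $S'\in N_s(G_x)$ and $T=S'\cup\{x\}\in N_{s+1}(G)$, then $p(T,G)\ge p(S',G_x)+1$: take any path in $G_x$ realising $p(S',G_x)$ and insert $x$ immediately before the consecutive block $S'$ (legal because every vertex of the path lies in $N(x)$), obtaining a path in $G$ with $T$ consecutive. Together with $\sum_x n_{s-1}(G_x)=s\,n_s(G)$ and the $(s{+}1)$-to-one correspondence between pairs $(x,S')$ and $(s{+}1)$-cliques $T$, this yields the inequality in a few lines, with no path-covering lemma needed.
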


We introduce some notation. Let $G=(V(G),E(G))$ be a simple graph (without loops or multiple edges), where $V(G)$ and $E(G)$ denote the vertex set and edge set of $G$. We say that $G$ is
a \emph{weighted graph} if each edge $e$ is assigned a non-negative number $w(e)$, which is called the
\emph{weight} of $e$; if we want to distinguish the underlying graph, we write $w_G(e)$
instead of $w(e)$. For any subgraph $H$ of $G$, the \emph{weight} of $H$ is defined as
$w(H)=\sum_{e\in E(H)}w(e)$. Setting $w(e)=1$ for any edge $e\in E(G)$ in the weighted graph $G$,
one can see that a weighted graph is a generalization of the ordinary graph. Let $S\subseteq V(G)$. Denote by $G[S]$ the
subgraph of $G$ induced by $S$. We use $G-S$ to denote the induced subgraph $G[V(G)\backslash S]$.
When $S=\{v\}$, we use $G-v$ instead of $G-\{v\}$. Let $H_1$ and $H_2$ be two graphs. We use $H_1\vee H_2$ to denote the join of $H_1$ and $H_2$. For a subgraph $H$ of $G$, the size of $H$ is defined
as the number of edges in $H$. When we say that $H$ is a subgraph of $G$, we use the notation $H\subseteq G$. Let $s\geq 1$
be an integer. We use $N_s(G)$ to denote the set of all $s$-cliques in $G$, and $n_s(G)=|N_s(G)|$.

The rest of the paper is organized as follows. In Section \ref{Sec:2},
we give very short proofs of Theorem \ref{Theorem:MT}(A) and (B), and Theorem \ref{Thm:Weight-MT}, respectively.
In Section \ref{Sec:3}, 
we present proofs of Theorems
\ref{Th:Local-BBRS}, \ref{Th:Local-Matching}, and \ref{Thm:Local-generalized-Turan},
respectively.

\section{Short proofs of Theorem
\ref{Theorem:MT}(A) and Theorem \ref{Theorem:MT}(B), and proof of Theorem \ref{Thm:Weight-MT}}\label{Sec:2}
Our main tool for short proofs of Theorems
\ref{Theorem:MT}(A) and (B) is the following theorem.

\begin{thm}\label{Thm:weighted graphs}
(A) (Frieze-McDiarmid-Reed \cite{FMR1992})\label{Thm:FMR}
Let $G$ be a weighted graph. Then $G$ has a path with weight at least $$\frac{2w(G)}{n}.$$\\
(B) (Bondy-Fan \cite{BF1991})\label{Thm:Bondy-Fan1991}
Let $G$ be a 2-edge-connected weighted graph. Then $G$ has a cycle with weight at least $$\frac{2w(G)}{n-1}.$$
\end{thm}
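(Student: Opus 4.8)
Both statements are classical and are quoted here as tools: part~(A) is the weighted form of the Erd\H{o}s--Gallai path theorem proved by Frieze, McDiarmid and Reed \cite{FMR1992}, and part~(B) is Bondy and Fan's weighted cycle theorem \cite{BF1991}. The plan is therefore to invoke them directly; but if one wants a self-contained treatment, the natural route is to rerun the two proofs of Theorem~\ref{Thm:EG}(A) and (B) with ``number of edges'' replaced by ``total weight'' everywhere, using throughout that non-negativity of the weights lets us extend a path or a cycle along any edge without decreasing its weight.

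For part~(A) I would first reduce to connected $G$: if $G_1,\dots,G_t$ are the components, then $\sum_i|V(G_i)|=n$ and $\sum_i w(G_i)=w(G)$, so by the mediant inequality some component $G_i$ has $w(G_i)/|V(G_i)|\ge w(G)/n$ and a heaviest path of $G_i$ already works. For connected $G$ I would induct on $n$ along the block tree; this forces a rooted strengthening (``there is a heavy path having a prescribed vertex as an endpoint'') of exactly the shape of Theorem~\ref{Th:Local-BBRS}, so that the rooted paths in two blocks sharing a cut-vertex can be concatenated. The core case is a $2$-connected block: take a path $P=v_0v_1\cdots v_\ell$ of maximum weight and, among those, of maximum length, so that $P$ cannot be extended and both endpoints have all their neighbours on $P$; one then bounds the weighted degree sum $\sum_{v\in V(P)}d_w(v)$ (where $d_w(v)=\sum_{e\ni v}w(e)$) against $\ell\,w(P)$.

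Part~(B) follows the same pattern with a heaviest cycle in place of a heaviest path: reduce to a $2$-edge-connected graph, take a cycle $C=u_0u_1\cdots u_{k-1}u_0$ of maximum weight, use $2$-edge-connectivity together with non-negativity to rule out a weight-improving shortcut of $C$, and estimate $w(G)$ by comparing the total weighted degree with $k\,w(C)$, treating separately the edges of $C$ itself, the chords of $C$, and the edges meeting $V(G)\setminus V(C)$. The denominator $n-1$ (rather than $n$) is inherited from the classical cycle bound of Theorem~\ref{Thm:EG}(B); I do not see a simple black-box reduction of (B) to (A), which is why Bondy and Fan's argument is genuinely separate.

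The step I expect to be the real obstacle, in both parts, is the exchange move. In the unweighted proofs one freely swaps one edge of a longest path or cycle for another edge, since both have ``cost'' one; in the weighted setting such a swap changes the total weight by the difference of two edge-weights, which may be negative, so a heaviest path need not behave like a longest path and the naive P\'osa rotation breaks down. Handling precisely this is the substance of \cite{FMR1992} and \cite{BF1991}, and a complete proof here would have to reproduce their more careful amortized/potential-function arguments rather than the classical rotation.
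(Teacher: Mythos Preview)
Your proposal is correct and matches the paper's treatment: the paper does not prove this theorem at all but simply quotes it as an external tool, citing \cite{FMR1992} and \cite{BF1991}, and you identify this explicitly in your first sentence. Your additional sketch of how one would reproduce those proofs is extra content beyond what the paper does; it is reasonable and honest about where the real difficulty lies (the weighted exchange move), but the paper itself treats both parts as black boxes.
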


\noindent
{\bf A short proof of Theorem \ref{Theorem:MT}(A).}
Set $w(e)=\frac{1}{p(e)}$. Then by Theorem \ref{Thm:FMR}(A), there is a path $P'$ with weight $w(P')\geq \sum_{e\in E(G)}\frac{2}{p(e)\cdot n}.$
For any path $P$ in $G$, we have
$w(P)=\sum_{e\in E(P)}\frac{1}{p(e)}\leq \sum_{e\in E(P)}\frac{1}{e(P)}=1,$
as $p(e)\geq e(P)$ for any $e\in E(P)$.
Thus, $w(P)\leq 1.$ 
The proof is complete. 
\qed

\vspace{2mm}

\noindent
{\bf A short proof of Theorem \ref{Theorem:Zhang}(B).}

Set
$c(e)=\max\{k: e\in E(H), H\subseteq G, H\mbox{ is isomorphic to~} C_k\}.$
First assume that $G$ is 2 edge-connected.
We construct a weighted 2 edge-connected $n$-vertex graph $G'$, which is obtained from $G$
by weighting each $e$ with the weight $\frac{1}{c(e)}$. Then $w(G')=\sum_{e\in E(G)}\frac{1}{c(e)}$.
By Theorem \ref{Thm:Bondy-Fan1991}(B), $G'$ has a cycle $C$ of weight at least $\frac{2w(G')}{n-1}=\sum_{e\in E(G)}\frac{2}{c(e)(n-1)}$.
Denote by $C'$ a cycle of $G$ with the maximum weight. Then, $w(C')=\sum_{e\in E(C')}w(e)=\sum_{e\in E(C')}\frac{1}{e(C)}=1$.
Thus,
$1\geq w(C)\geq \sum_{e\in E(G)}\frac{2}{c(e)(n-1)}.$

Next, consider the case $G$ is not 2 edge-connected. Suppose
$G$ contains $t$ cut-edges, in which we delete all these edges
resulting in $t+1$ components and each is 2 edge-connected.
Let $G_1,G_2,\ldots,G_{t+1}$ be all these components,
and $n_i=|G_i|$, where $i=1,2,\ldots,t$. Thus,
$\sum_{e\in E(G_i)}\frac{1}{c(e)}\leq \frac{n_i-1}{2}$ for any $i$.
Taking into account all these inequalities, we have
$$\sum_{e\in E(G)}\frac{1}{c(e)}=\sum_{i=1}^{t+1}\sum_{e\in E(G_i)}\frac{1}{c(e)}+\frac{t}{2}\leq \frac{n-(t+1)}{2}+\frac{t}{2}=\frac{n-1}{2}.$$
\qed
\begin{remark}
One can determine extremal graphs in Theorem \ref{Theorem:MT}(A) using Theorem 6.2 in \cite{BF1991}. For the extremal graphs in Theorem \ref{Theorem:Zhang}(B), it is more difficult. We refer the reader to Section 3 in \cite{BF1991}.
\end{remark}

Before the proof of Theorem \ref{Thm:Weight-MT}, we present two remarks.

\begin{remark}
Set $w(e)=1$ in $G$. Then $G$ is a simple $n$-vertex graph, and $w(p(e))=p(e)$. So, Theorem \ref{Theorem:MT} is a
corollary of Theorem \ref{Thm:Weight-MT}.
\end{remark}

\begin{remark}
We show that Theorem \ref{Thm:Weight-MT} implies Theorem \ref{Thm:FMR}(A). Choose a path $P^{*}\subseteq G$ so that $w(P^{*})$ is the maximum. For any $e\in E(G)$,
we have $w(p(e))\leq w(P^{*})$. Then
$$
\sum_{e\in E(G)}\frac{w(e)}{w(p(e))}\geq \sum_{e\in E(G)}\frac{w(e)}{w(P^{*})}.
$$
By Theorem \ref{Thm:Weight-MT}, we have $\sum_{e\in E(G)}\frac{w(e)}{w(p(e))}\leq \frac{n}{2}$.
Together with the above two inequalities, we have
$$
w(P^{*})\geq \frac{2w(G)}{n}.
$$
\end{remark}

Our novel idea to prove Theorem \ref{Thm:Weight-MT} is to use
the famous Small Path Double Cover Conjecture. This conjecture
was proposed by Bondy \cite{B1990} and confirmed by H. Li \cite{L1990} in 1990.
\begin{definition}[\cite{L1990}]
(i) A path double cover (PDC) of a graph $G$ is a collection $\mathcal{P}$
of paths of $G$ such that each edge of $G$ belongs to exactly two
paths of $\mathcal{P}$.\\
(ii) A small path double cover (SPDC) of a graph $G$ on $n$ vertices is a
PDC $\mathcal{P}$ of $G$ such that $|\mathcal{P}|\leq n$.
\end{definition}

\begin{thm}[H. Li \cite{L1990}]\label{Thm:LiHao}
Every simple graph $G$ admits a small path double cover.
\end{thm}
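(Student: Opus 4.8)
The plan is to prove, by induction, a strengthening of Theorem~\ref{Thm:LiHao} that carries enough extra structure to survive the inductive gluings: \emph{every connected graph $G$ on $n\ge 2$ vertices admits a path double cover $\mathcal P$ with $|\mathcal P|\le n$ in which every vertex of $G$ is an endpoint of at least two paths of $\mathcal P$.} Since each path has exactly two endpoints and each of the $n$ vertices is an endpoint of at least two paths, $2|\mathcal P|\ge 2n$; together with $|\mathcal P|\le n$ this forces $|\mathcal P|=n$ and every vertex to be an endpoint of exactly two paths, and the ``slack'' in the endpoint condition is precisely what will be spent when pieces are merged. Granting this strengthening, Theorem~\ref{Thm:LiHao} follows at once by taking the disjoint union of such covers over the connected components of an arbitrary $G$ (a trivial component contributing the empty family), since the component orders sum to $n$. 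The induction runs on $|V(G)|+|E(G)|$ and splits into three regimes: $G$ disconnected (done), $G$ connected with a cut-vertex (block reduction), and $G$ $2$-connected (ear decomposition).

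\emph{Block reduction.} Let $G$ be connected with a cut-vertex; choose a leaf block $B$ with unique cut-vertex $c$ and set $H:=G-(V(B)\setminus\{c\})$, so $H$ is connected, $V(B)\cap V(H)=\{c\}$, both have at least two vertices, and $|V(B)|+|V(H)|-1=n$. Apply the inductive hypothesis to $H$ and to $B$ (if $B\cong K_2$, use the doubled edge): in each cover every vertex, in particular $c$, is an endpoint of at least two paths. Choose a path $P_B$ of $B$'s cover ending at $c$ and a path $P_H$ of $H$'s cover ending at $c$, and in the union of the two covers replace $P_B,P_H$ by the single path $P_BP_H$ (legitimate as $V(P_B)\cap V(P_H)=\{c\}$). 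This is a path double cover of $G$ of size $\le|V(B)|+|V(H)|-1=n$; at $c$ the endpoint multiplicity drops by $2$ from a value $\ge 4$, at the two other ends of $P_B,P_H$ it drops by $1$ but is immediately restored by the new path $P_BP_H$, and nothing else changes, so the strengthened conclusion persists.

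\emph{Ear decomposition.} Let $G$ be $2$-connected, with an open ear decomposition $G=C_0\cup Q_1\cup\cdots\cup Q_k$, $C_0$ a cycle and each $Q_{i+1}$ a path whose ends lie in $G_i:=C_0\cup\cdots\cup Q_i$ but whose internal vertices are new. Processing the ears in order, I maintain a path double cover of $G_i$ with exactly $|V(G_i)|$ paths in which each vertex is an endpoint of exactly two paths. For the base, on a cycle $v_1\cdots v_\ell v_1$ take the $\ell$ paths $v_{j-1}v_jv_{j+1}$ (indices mod $\ell$); this doubly covers every edge and each $v_j$ is an endpoint of exactly the two paths centred at $v_{j-1}$ and $v_{j+1}$. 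If $Q_{i+1}$ has ends $x\ne y$ and $t\ge 1$ new internal vertices $u_1,\dots,u_t$, keep the cover of $G_i$, adjoin the $t$ new paths $xu_1\cdots u_ty$ and $u_{j-1}u_j$ ($j=2,\dots,t$), and lengthen some path ending at $x$ by the edge $xu_1$ and a distinct path ending at $y$ by $u_ty$ (both exist since those endpoints have multiplicity two, and distinctness can be arranged). One checks directly that the $t+1$ new edges become doubly covered, that $u_1$ becomes an endpoint of exactly the lengthened $x$-path and of $u_1u_2$ ($u_t$ of the lengthened $y$-path and of $u_{t-1}u_t$, an inner $u_j$ of $u_{j-1}u_j$ and $u_ju_{j+1}$), and that the endpoint each of $x,y$ loses to lengthening is returned by $xu_1\cdots u_ty$; since $|V(G_{i+1})|=|V(G_i)|+t$, the invariant survives.

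\emph{The main obstacle.} The one case not handled above is an ear $Q_{i+1}$ with \emph{no} internal vertices --- a single new edge $e=xy$ inside $G_i$ --- since then the path budget does not increase, yet two copies of $e$ must be absorbed into the existing paths while every endpoint multiplicity stays equal to two; this will be the crux. If the current cover has a path ending at $x$ and avoiding $y$ and a different path ending at $y$ and avoiding $x$, one just appends $e$ to each, and a two-way incidence count shows that all multiplicities, the doubleness, and the size are preserved. The hard part will be the opposite configuration, where (after a symmetry) every path ending at $x$ runs through $y$: here I would exploit the $2$-connectivity of $G_i$, i.e.\ the existence of two internally disjoint $x$--$y$ paths, to reroute --- splitting a cover-path that joins $x$ to $y$ at a well-chosen vertex and re-splicing it, together with $e$ and one further cover-path through that vertex, into the same number of paths, arranging simultaneously the three invariants (correct edge count, every edge doubly covered, every endpoint multiplicity equal to two). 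Showing that such a surgery is always possible across its several sub-configurations is the substance of the proof; everything else is bookkeeping.
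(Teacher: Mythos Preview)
The paper does not contain a proof of Theorem~\ref{Thm:LiHao}; it is quoted as a known result of H.~Li~\cite{L1990} and used only as a tool in the proof of Theorem~\ref{Thm:Weight-MT}. There is therefore no ``paper's proof'' to compare your attempt against.

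Evaluating your argument on its own: the strengthening you aim for---a path double cover of size exactly $n$ in which every vertex is an endpoint of exactly two paths---is precisely Bondy's \emph{perfect} path double cover conjecture, and is indeed what Li proved. Your block reduction is sound, and your ear-step with at least one internal vertex checks out (the new internal vertices are absent from all existing paths, so the two extensions are always legal and distinctness of the extended paths can always be arranged). But your proof is genuinely incomplete exactly where you say it is: for a chord $xy$ (an ear with no internal vertex) you must absorb two copies of $xy$ into the current cover without changing the number of paths or any endpoint multiplicity, and you leave unresolved the configuration in which both cover-paths ending at $x$ already pass through $y$. You promise a ``surgery'' exploiting two internally disjoint $x$--$y$ paths in $G_i$, but $2$-connectivity of $G_i$ gives you no control over how the \emph{cover paths} sit relative to $x$ and $y$; the rerouting you sketch (split a cover-path at a well-chosen vertex, re-splice with $e$ and another cover-path) is not specified, and there is no argument that the three invariants can be simultaneously maintained. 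This is not bookkeeping---it is the whole difficulty---and it is far from clear that the ear-decomposition framework handles it cleanly, since the structure of the PPDC at the moment a chord arrives is essentially arbitrary. Li's original argument does not use ear decomposition. As written, your proposal is a plan with the essential step missing.
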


\noindent
{\bf Proof of Theorem \ref{Thm:Weight-MT}}.
By Theorem \ref{Thm:LiHao}, there is a set of paths $\mathcal{P}=\{P_i:1\leq i\leq t\}$, where $t\leq n$,
such that each edge is contained in exactly two paths of $P$. Thus, we have
$$
\bigcup_{i=1}^n E(P_i)=\{2e_i: e_i\in E(G)\}.
$$
Observe that
$\sum_{e\in E(P_i)}w(e)=w(P_i)$
and $w(P(e))\geq w(P_i)$ when $e\in E(P_i)$
for any $i\in [t]$.
So
$$
\sum_{e\in E(G)}\frac{2w(e)}{w(p(e))}=\sum_{i=1}^t\sum_{e\in E(P_i)}\frac{w(e)}{w(p(e))}\leq \sum_{i=1}^t\sum_{e\in E(P_i)}\frac{w(e)}{w(P_i)}=t\leq n,
$$
and it follows $\sum_{e\in E(G)}\frac{w(e)}{w(p(e))}\leq \frac{n}{2}$. \qed

\section{Proofs of Theorems
\ref{Th:Local-BBRS}, \ref{Th:Local-Matching}, and \ref{Thm:Local-generalized-Turan}}\label{Sec:3}

\vspace{2mm}
\noindent
{\bf Proof of Theorem \ref{Th:Local-BBRS}.}
We prove the theorem by induction on $n$. The assertion is trivial if $n=2$. So we assume $n\geq 3$.

\setcounter{claim}{0}
\begin{claim}\label{Clnotcut}
If $v$ is a cut-vertex of $G$, then
$$\frac{1}{2}\sum_{e\in E_v(G)}\frac{1}{p_v(e)}+\sum_{e\notin E_v(G)}\frac{1}{p_v(e)}\leq \frac{n-1}{2},$$ 
where equality holds if and only if $G$ consists of (some) cliques sharing $v$ as a common vertex.
\end{claim}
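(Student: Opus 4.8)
The plan is to decompose $G$ at the cut-vertex $v$ and close the induction on $n$ (the one begun in the proof of Theorem~\ref{Th:Local-BBRS}) by applying the induction hypothesis to the pieces. Since $v$ is a cut-vertex, let $C_1,\dots,C_k$ (with $k\ge 2$) be the components of $G-v$, and put $G_i=G[V(C_i)\cup\{v\}]$ and $n_i=|V(G_i)|$. Each $G_i$ is connected, contains $v$, satisfies $2\le n_i<n$, and $\sum_{i=1}^{k}(n_i-1)=n-1$. Moreover $\{E(G_1),\dots,E(G_k)\}$ partitions $E(G)$, and every edge of $E_v(G)$ lies in $E_v(G_i)$ for exactly one $i$.

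The crucial observation is that a path of $G$ which starts at $v$ uses exactly one edge at $v$, so once it leaves $v$ it cannot return and is therefore confined to a single $G_i$. Hence, for every $e\in E(G_i)$, a longest path of $G$ starting at $v$ and passing through $e$ lies entirely in $G_i$; that is, $p_v(e)$ is the same whether computed in $G$ or in $G_i$. Consequently the left-hand side of the claimed inequality splits as
\[
\sum_{i=1}^{k}\left(\frac12\sum_{e\in E_v(G_i)}\frac{1}{p_v(e)}+\sum_{e\in E(G_i)\setminus E_v(G_i)}\frac{1}{p_v(e)}\right),
\]
where inside each summand $p_v$ is taken in $G_i$.

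Applying the induction hypothesis to each $G_i$ bounds the $i$-th summand by $(n_i-1)/2$, and summing gives the desired bound $(n-1)/2$. For the equality characterization, equality forces each summand to be tight, so by the induction hypothesis each $G_i$ is either a clique (when $v$ is not a cut-vertex of $G_i$) or a union of cliques sharing $v$ (when it is); in either case $G_i$ is a union of cliques through $v$, and reassembling at $v$ shows that $G$ is a union of cliques sharing $v$. Conversely, if $G=\bigcup_j Q_j$ with the $Q_j$ pairwise meeting only at $v$, then — since a path from $v$ that enters $Q_j$ cannot leave it — one has $p_v(e)=|Q_j|-1$ for each $e\in E(Q_j)$, and counting the $|Q_j|-1$ edges at $v$ and the $\binom{|Q_j|-1}{2}$ remaining edges of $Q_j$ yields exactly $\tfrac{|Q_j|-1}{2}$ per clique, summing to $(n-1)/2$.

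I do not expect a genuine obstacle here; the only delicate point is the invariance $p_v(e)$ under passing from $G$ to $G_i$, which is precisely where the hypothesis that the paths start at $v$ is used, and it is what lets the induction go through cleanly. (The base case $n=2$ has already been dispatched before the claim.)
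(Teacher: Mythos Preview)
Your proof is correct and follows essentially the same route as the paper: decompose at the cut-vertex, observe that $p_v(e)$ is unchanged when computed in the piece $G_i$, apply the induction hypothesis to each $G_i$, and sum. One small remark: since $G_i-v=C_i$ is connected by construction, $v$ is never a cut-vertex of $G_i$, so in the equality analysis only the clique case of the induction hypothesis actually occurs (the paper notes this explicitly); your phrasing covers this anyway, and your explicit verification of the converse direction is a nice addition.
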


\begin{proof}
Suppose that $v$ is a cut-vertex of $G$. Let $H_1,\ldots,H_k$ be all components of $G-v$,
$G_i=G[V(H_i)\cup\{v\}]$ and $n_i=|V(G_i)|$, where $i\in [k]$.
Observe that for every edge $e\in E(G_i)$, a longest $v$-path through $e$ in $G$ is also such
a path in $G_i$. This means that the length of a longest $v$-path through any edge $e\in G_i$ in $G_i$ is the same as the length of a longest $v$-path through any edge $e\in G_i$ in $G$.
Denote by $E_v(G_i)=\{e:e\in E(G_i),\mbox{e is incident to v}\}$.
By the induction hypothesis,
$$\frac{1}{2}\sum_{e\in E_v(G_i)}\frac{1}{p_v(e)}+\sum_{e\in E(G_i)\backslash E_v(G_i)}\frac{1}{p_v(e)}\leq \frac{n_i-1}{2},$$
where equality holds if and only if $G_i$ is a clique when $v$ is not a cut-vertex of $G_i$; and $G_i$ consists of cliques sharing the common vertex $v$, when $v$ is the cut-vertex. As $H_i:=G_i-v$ is a component, equality of the inequality on $G_i$ holds if and only if $G_i$ is a clique.
Taking the sum of all $i=1,\ldots,k$, we can get the assertion.
\end{proof}
We assume that $v$ is not a cut-vertex of $G$.
Denote by $p$ the length of a longest $v$-path of $G$. We have the following.

\begin{claim}\label{ClLogestvPath}
If $u$ is the terminus of a longest $v$-path of $G$, then $p_v(e)=p$ for every edge $e\in E_u(G)=\{uw:w\in N_G(u)\}$ and $d(u)>(p+\varepsilon)/2$, where $\varepsilon=1$ if $vu\in E(G)$, and $\varepsilon=0$ otherwise.
\end{claim}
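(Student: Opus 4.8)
The plan is to prove the two assertions of the claim separately. Fix a longest $v$-path $P=x_0x_1\cdots x_p$ with $x_0=v$ and $x_p=u$; by the choice of $p$, no $v$-path of $G$ has length exceeding $p$. For the first assertion, that $p_v(e)=p$ for every edge $e\in E_u(G)$, I would first observe that every neighbour of $u$ lies on $P$: otherwise, appending such a neighbour to $P$ produces a $v$-path of length $p+1$, a contradiction. Hence $N_G(u)\subseteq\{x_0,\dots,x_{p-1}\}$, and $\varepsilon=1$ precisely when $v=x_0\in N_G(u)$. Now take any neighbour $x_i$ of $u$. If $1\le i\le p-1$, then $x_0x_1\cdots x_ix_px_{p-1}\cdots x_{i+1}$ is again a $v$-path on all $p+1$ vertices of $P$, hence of length $p$, and it uses the edge $x_iu$; so $p_v(x_iu)=p$ by maximality of $p$. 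If $i=0$ (that is, $vu\in E(G)$), then the reversal $x_0x_px_{p-1}\cdots x_1$ is a $v$-path of length $p$ through $vu$, so $p_v(vu)=p$. This exhausts $E_u(G)$ and gives the first assertion.

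For the second assertion, $d(u)>(p+\varepsilon)/2$, I expect this is to be read as the inductive reduction step of the theorem rather than as a purely structural fact (the inequality fails for, e.g., $G=C_4$ taken in isolation). Suppose $d(u)\le(p+\varepsilon)/2$ and set $G'=G-u$. Since $N_G(u)\subseteq V(P)$, every walk between two vertices other than $u$ that passes through $u$ can be short-circuited along $P$; hence $G'$ is connected and $v$ remains a non-cut-vertex of $G'$ (the latter must be checked inside $G-v$, again using only $N_G(u)\subseteq V(P)$), and $|V(G')|=n-1\ge 2$. Applying the induction hypothesis to $(G',v)$, and using that $G'\subseteq G$ forces $p_v^{G'}(e)\le p_v^{G}(e)$ for every $e\in E(G')$, the part of the left-hand side for $G$ coming from edges of $G'$ is at most the left-hand side for $G'$, hence at most $(n-2)/2$. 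The remaining $d(u)$ edges are those incident to $u$; by the first assertion each has $p_v^{G}=p$, and exactly one of them, namely $uv$ (present iff $\varepsilon=1$), carries the coefficient $\tfrac12$, so their total contribution equals $\frac{2d(u)-\varepsilon}{2p}\le\frac12$ by the supposition. Summing, the left-hand side for $G$ is at most $(n-1)/2$; thus the theorem already holds for $G$ in this case, and in the remaining case we have $d(u)>(p+\varepsilon)/2$, which is what the claim records.

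The main obstacle I anticipate is conceptual rather than computational: one must recognise that the degree inequality is really the dichotomy ``either $u$ can be deleted and the theorem follows by induction, or $d(u)>(p+\varepsilon)/2$'', and then verify carefully that deleting $u$ preserves \emph{both} connectedness of $G$ and the non-cut-vertex status of $v$. The secondary nuisance is the bookkeeping with the weight $\tfrac12$ on edges of $E_v(G)$ and with the parameter $\varepsilon$, so that the single edge $uv$ is counted exactly once and with the correct weight. Once the ``all neighbours of $u$ lie on $P$'' observation is in place, the rotation argument for the first assertion and the weight estimate for the second are routine.
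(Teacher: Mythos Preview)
Your proposal is correct and follows the same approach as the paper: rotate $P$ to show $p_v(e)=p$ for every $e\in E_u(G)$, then delete $u$, apply the induction hypothesis to $G'=G-u$, and compute that the edges at $u$ contribute at most $\tfrac12$ when $d(u)\le(p+\varepsilon)/2$, so that $d(u)>(p+\varepsilon)/2$ may be assumed thereafter. One remark: you need not verify that $v$ remains a non-cut-vertex of $G'$, since the induction hypothesis is the full theorem for connected $(n-1)$-vertex graphs and Claim~\ref{Clnotcut} already covers the case where $v$ is a cut-vertex there.
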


\begin{proof}
Let $P$ be a longest $v$-path of $G$ with the terminus $u$, and let $G'=G-u$. We first show that $G'$ is connected. We have $N(u)\subseteq V(P)$ as $P$ is a longest $v$-path. If $u$ is a cut-vertex of $G$, then $G'$ has a component not containing $v$. Any neighbor of $u$ in such component is not contained in $P$, a contradiction. Thus, $G'$ is connected.

For every edge $e\in E_u(G)$, say $e=uw$ with $w\in V(P)$, the path $P'=P[v,w]wuP[u,w^+]$ (where $w^+$ is the successor of $w$ along $P$ in the direction from $v$ to $u$) is a longest $v$-path through $e$. That is, $p_v(e)=|E(P')|=p$.

Denote by $p'_v(e)$ the length of a longest $v$-path of $G'$ through $e$ for $e\in E(G')=E(G)\backslash E_u(G)$. Clearly $p_v(e)\geq p'_v(e)$ for every edge $e\in E(G')$. By the induction hypothesis,
$$\frac{1}{2}\sum_{e\in E_v(G')}\frac{1}{p'_v(e)}+\sum_{e\in E(G')\backslash E_v(G')}\frac{1}{p'_v(e)}\leq \frac{n-2}{2}.$$
If $d(u)\leq(p+\varepsilon)/2$, then
\begin{align*}
  &\frac{1}{2}\sum_{e\in E_v(G)}\frac{1}{p_v(e)}+\sum_{e\notin E_v(G)}\frac{1}{p_v(e)}\\
  =    &\frac{1}{2}\sum_{e\in E_v(G')}\frac{1}{p_v(e)}+\frac{\varepsilon}{2p_v(vu)}
        +\sum_{e\in E(G')\backslash E_v(G')}\frac{1}{p_v(e)}+\sum_{e\in E_u(G)\backslash E_v(G)}\frac{1}{p_v(e)}\\
  \leq &\frac{1}{2}\sum_{e\in E_v(G')}\frac{1}{p'_v(e)}+\sum_{e\in E(G')\backslash E_v(G')}\frac{1}{p'_v(e)}
        +\frac{\varepsilon}{2p}+\sum_{e\in E_u(G)\backslash E_v(G)}\frac{1}{p}\\
  \leq &\frac{n-2}{2}+\frac{1}{p}\left(d(u)-\frac{\varepsilon}{2}\right)\\
  \leq &\frac{n-2}{2}+\frac{1}{p}\left(\frac{p+\varepsilon}{2}-\frac{\varepsilon}{2}\right)\\
  \leq&\frac{n-1}{2},
\end{align*}
as desired. Thus, we have $d(u)>(p+\varepsilon)/2$.
\end{proof}

Now, let $P$ be a longest $v$-path of $G$, and let $u$ be the terminus of $P$. Set $P=v_0v_1v_2\ldots v_p$, where $v_0=v$ and $v_p=u$. Let $v_i\in N(u)$
such that $i$ is the minimum with respect to $P$. Subject to this, we choose $P$ so that $i$ is as small as possible among all longest $v$-paths. Let $H=G[\{v_i,v_{i+1},\ldots,v_p\}]$, and $h=|V(H)|$.

\begin{claim}\label{ClHamiltonH}
For any vertex $v_j$ with $p\geq j\geq i+1$, we have (1) $H$ has a Hamilton path from $v_i$ to $v_j$; (2) $d(v_j)>h/2$; and (3) $N(v_j)\subseteq V(H)$.
\end{claim}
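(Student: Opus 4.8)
I would prove (1), (2) and (3) simultaneously by downward induction on $j$, from $j=p$ down to $j=i+1$, using Claim~\ref{ClLogestvPath} and the extremal choice of $i$ for (2) and (3) and a rotation--extension argument inside $H$ for (1); as in the proof of Claim~\ref{ClLogestvPath}, this runs inside the outer induction on $n$ for Theorem~\ref{Th:Local-BBRS}, so whenever a relevant degree is small one may finish the bound $\le(n-1)/2$ at once and assume the contrary. The base case $j=p$ is immediate: (1) is the segment $v_iv_{i+1}\cdots v_p$ of $P$; (3) holds since $N(v_p)\subseteq V(P)$ ($P$ being a longest $v$-path) and every neighbour of $v_p=u$ on $P$ has index $\ge i$ by the definition of $i$; and (2) is Claim~\ref{ClLogestvPath} together with $(p+\varepsilon)/2\ge h/2$ (recall $h=p-i+1$ and that $i=0$ forces $\varepsilon=1$). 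I would also record here that $v_iv_p\in E(G)$, so $C:=v_iv_{i+1}\cdots v_pv_i$ is a Hamilton cycle of $H$; this cycle is the backbone of all the rotations.

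For the inductive step I would first check that, for a fixed $j$, (1) forces (2) and (3). If $Q$ is a Hamilton path of $H$ from $v_i$ to $v_j$, then $P^*:=P[v_0,v_i]\,Q$ is again a longest $v$-path of $G$, now ending at $v_j$; Claim~\ref{ClLogestvPath} applied to $P^*$ gives $d(v_j)>(p+\varepsilon)/2\ge h/2$, which is (2), and $N(v_j)\subseteq V(P^*)=V(P)$. To sharpen the last inclusion to $N(v_j)\subseteq V(H)$, note that $P^*$ has the same initial segment $v_0v_1\cdots v_{i-1}$ as $P$; so a neighbour $v_k$ of $v_j$ with $k<i$ would sit in position $k<i$ on $P^*$, pushing the least-indexed neighbour of the terminus of $P^*$ below $i$ and contradicting the global minimality of $i$. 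Hence $N(v_j)\cap\{v_0,\dots,v_{i-1}\}=\emptyset$, i.e.\ $N(v_j)\subseteq V(H)$, which is (3).

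The core is then the inductive step for (1). Assume (1)--(3) for every index in $\{j+1,\dots,p\}$, and let $D\subseteq V(H)\setminus\{v_i\}$ be the set of all $w$ for which $H$ has a Hamilton $v_i$--$w$ path. Then $v_p\in D$, and by the previous paragraph each $w\in D$ satisfies $d_H(w)>h/2$ and $N(w)\subseteq V(H)$; since all neighbours of $w$ lie in $V(H)$, they all lie on any Hamilton $v_i$--$w$ path, so the standard rotation (reverse the segment between $w$ and the successor of a chord-neighbour) keeps $D$ closed. Rotations of the path $v_iv_{i+1}\cdots v_p$ through the chords $v_pv_k$ ($v_k\in N(v_p)$) already place $\{v_{k+1}:v_k\in N(v_p)\}$ in $D$, so in particular $v_{i+1}\in D$. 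I would then combine a P\'osa-type rotation argument (bounding $N_H(D)$ in terms of $|D|$) with $|N_H(D)|\ge\max_{w\in D}d_H(w)>h/2$ and with the Hamilton cycle $C$ (which transports endpoints of Hamilton paths and turns any Hamilton $v_i$--$w$ path with $w\in N(v_i)$ into a Hamilton cycle) to force $D=\{v_{i+1},\dots,v_p\}$; in particular $v_j\in D$, which is (1) for $j$.

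I expect the last step --- pushing the rotation closure $D$ from ``large'' to all of $\{v_{i+1},\dots,v_p\}$ --- to be the main obstacle, precisely because $d_H(\cdot)>h/2$ is available only for vertices of index $\ge i+1$, with nothing claimed about $d_H(v_i)$, so one cannot simply invoke a Hamilton-connectivity theorem for $H$: the fixed endpoint $v_i$ and the cycle $C$ have to be used essentially (for instance, when $d_H(v_i)=2$ one is forced into a Hamilton-path question in $H-v_i$). A secondary point to handle with care is the degenerate case $i=0$, i.e.\ $v_i=v$, where $h=p+1$ and the inequality $(p+\varepsilon)/2\ge h/2$ supporting (2) is tight; it is worth isolating this case.
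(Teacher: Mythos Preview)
Your reduction ``(1) $\Rightarrow$ (2),(3)'' via $P^*=P[v_0,v_i]\,Q$ and the minimality of $i$ is correct and is exactly what the paper does first. The problem is your argument for (1). You set up a rotation closure $D\subseteq V(H)\setminus\{v_i\}$, observe $v_p,v_{i+1}\in D$, and then say you would ``combine a P\'osa-type rotation argument \ldots\ to force $D=\{v_{i+1},\dots,v_p\}$'', while explicitly flagging this as ``the main obstacle''. As written, that is not a proof: P\'osa's lemma only bounds $|N_H(D)\setminus D|$ in terms of $|D|$, which does not by itself force $D=V(H)\setminus\{v_i\}$, and the asymmetry at $v_i$ (nothing known about $d_H(v_i)$) that you yourself point out is a genuine obstruction to closing this via Hamilton-connectivity results. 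Also, your ``downward induction on $j$'' is never actually used; the $D$-argument, if it worked, would give all $j$ at once.

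The paper avoids all of this with a short \emph{upward} induction on $j$ along the fixed path $P$. The bases $j=p$ and $j=i+1$ are immediate (the latter via $v_iv_pv_{p-1}\cdots v_{i+1}$, using $v_iv_p\in E(G)$). For the step from $j-1$ to $j$ one uses only the two vertices $v_{j-1}$ and $v_p$: by the induction hypothesis (1) holds for $v_{j-1}$, hence (2),(3) give $N(v_{j-1})\subseteq V(H)$ and $d(v_{j-1})>h/2$, and likewise $d(v_p)>h/2$; so $d(v_{j-1})+d(v_p)>h$. If $v_{j-1}v_p\in E(G)$ then $P[v_i,v_{j-1}]\,v_{j-1}v_p\,P[v_p,v_j]$ works. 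Otherwise an Ore-type pigeonhole along the indices of $P$ yields a crossing pair: either some $k\in[i,j-2]$ with $v_kv_{j-1},v_{k+1}v_p\in E(G)$, or some $k\in[j,p]$ with $v_{j-1}v_k,v_{k-1}v_p\in E(G)$; in each case an explicit Hamilton $v_i$--$v_j$ path of $H$ is written down. No rotation closure, no appeal to properties of $v_i$, and no separate treatment of $i=0$ is needed.
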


\begin{proof}
We first show that for every $v_j$ with $j\in[i+1,p]$, (1) implies (2) and (3). Suppose $Q$ is a Hamilton path of $H$ from $v_i$ to $v_j$. Note that $P'=P[v,v_i]Q[v_i,v_j]$ is a longest $v$-path of $G$. By Claim \ref{ClLogestvPath}, $d(v_{j})>\frac{p+\varepsilon}{2}\geq h/2$, and (2) holds. As $P'$ is a longest $v$-path, $v_j$ has no neighbors outside $P'$; and by the choice of $P$, $v_j$ has no neighbors in $P[v_0,v_{i-1}]$. That is, $N(v_j)\subseteq V(H)$ and (3) holds.

Now it suffices to prove that $H$ has a Hamilton path from $v_i$ to $v_j$ for all $j\in[i+1,p]$. This is trivially true for $v_j=v_p$. So we suppose that $p\geq i+2$ and $j\in[i+1,p-1]$.

We prove this by induction on $j$. For $j=i+1$, $Q=v_iv_pv_{p-1}\ldots v_{i+1}$ is a Hamilton path in $H$ from $v_i$ to $v_{i+1}$. Now assume that the assertion is true for $v_{j-1}$ and we consider the vertex $v_j$, $j\leq p-1$. We notice that (2) and (3) hold for $v_{j-1}$, i.e., $d(v_{j-1})\geq(p+1)/2\geq h/2$, and $N(v_{j-1})\subseteq V(H)$. If $v_0v_p\in E(G)$, then $h=p+1$ and by Claim \ref{ClLogestvPath}, $d(v_p)>(p+1)/2=h/2$; if $v_0v_p\notin E(G)$, then $h\leq p$ and $d(v_p)>p/2\geq h/2$. For each case, we have that $d(v_{j-1})+d(v_p)>h$.

If $v_{j-1}v_p\in E(G)$, then $Q=P[v_i,v_{j-1}]v_{j-1}v_pP[v_p,v_j]$ is a Hamilton path of $H$ from $v_i$ to $v_j$, as desired. So we assume that $v_{j-1}v_p\notin E(G)$. Recall that $d(v_{j-1})+d(v_p)>h$ and $N(v_{j-1})\cup N(v_p)\subseteq V(H)$. There is either a vertex $v_k$, $k\in[i,j-2]$, such that $v_kv_{j-1}, v_{k+1}v_p\in E(G)$, or a vertex $v_k$, $k\in[j,p]$, such that $v_{j-1}v_k,v_{k-1}v_p\in E(G)$. Let $Q=P[v_i,v_k]v_kv_{j-1}P[v_{j-1},v_{k+1}]v_{k+1}v_pP[v_p,v_j]$ for the former and $Q=P[v_iv_{j-1}]v_{j-1}v_kP[v_k,v_p]$ $v_pv_{k-1}P[v_{k-1},v_j]$ for the later. Then $Q$ is a Hamilton path of $H$ from $v_i$ to $v_j$.
\end{proof}

By Claim \ref{ClHamiltonH}, every vertex in $V(H)\backslash\{v_i\}$ is the terminus of a longest $v$-path of $G$; and by Claim \ref{ClLogestvPath},
every edge $e\in E(H)$ has $p_v(e)=p$. Let $G'=G-(V(H)\backslash\{v_i\})$.

If $i=0$, then $v_0v_p=uv\in E(G)$.
By Claim \ref{ClHamiltonH}(i), 
$H$ has a Hamilton path from $v_0$ to $v_j$ for any $j\in [1,p]$. Thus, $V(H)=V(G)$; since otherwise, there is a longer $v$-path, a contradiction. It follows that $G=H$ and $p=n-1$. Thus, we have 
$$\frac{1}{2}\sum_{e\in E_v(G)}\frac{1}{p_v(e)}+\sum_{e\notin E_v(G)}\frac{1}{p_v(e)}=\frac{d(v)}{2p}+\frac{e(G)-d(v)}{p}\leq\frac{p}{2p}+\frac{p(p-1)}{2p}=\frac{p}{2}=\frac{n-1}{2},$$
where the last inequality holds as $2e(G)-d(v)\leq (n-1)^2$. Moreover, equality holds if and only if the last inequality becomes equality, and so $d(v)=p$ and $2e(G)=n(n-1)$, which implies $G$ is a $K_n$.

Now assume that $i\in[1,p-1]$. This implies that $vu\notin E(G)$, $h\leq p$, and from Claim \ref{ClHamiltonH} we know that $v_i$ is a cut-vertex of $G$. Denote by $p'_v(e)$ the length of a longest $v$-path of $G'$ through $e$ for $e\in E(G')$. Notice that $p_v(e)\geq p'_v(e)$ for every edge $e\in E(G')$. By the induction hypothesis,
$$\frac{1}{2}\sum_{e\in E_v(G')}\frac{1}{p'_v(e)}+\sum_{e\in E(G')\backslash E_v(G')}\frac{1}{p'_v(e)}\leq \frac{n-h}{2}.$$
It follows that
\begin{align*}
  &\frac{1}{2}\sum_{e\in E_v(G)}\frac{1}{p_v(e)}+\sum_{e\notin E_v(G)}\frac{1}{p_v(e)}\\
  =    &\frac{1}{2}\sum_{e\in E_v(G')}\frac{1}{p_v(e)}+\sum_{e\in E(G')\backslash E_v(G')}\frac{1}{p_v(e)}+\sum_{e\in E(H)}\frac{1}{p_v(e)}\\
  \leq &\frac{1}{2}\sum_{e\in E_v(G')}\frac{1}{p'_v(e)}+\sum_{e\in E(G')\backslash E_v(G')}\frac{1}{p'_v(e)}+\sum_{e\in E(H)}\frac{1}{p}\\
  \leq &\frac{n-h}{2}+\frac{1}{p}\cdot\frac{h(h-1)}{2}\leq\frac{n-1}{2},
\end{align*}
as desired. Notice that if all above inequalities become equalities, then $G'$ and $H$ are cliques, $p=h$, and $p_v(e)=p'_v(e)$ for all $e\in E(G')$. However, 
for an edge $e_i$ in $G'$
incident to $v_i$,
$p'_v(e_i)=i$ and $p_v(e_i)=n-1$. Thus, $p_v(e_i)\neq p'_v(e_i)$.
So, for this case, the last inequality is a strict inequality.
The proof is complete. \qed

We mention that the following result is stronger than Balister et al.'s result.

\begin{thm}[\cite{N2011}]\label{Thm:Ning-MSC}
Let $G$ be a connected graph and $v\in V(G)$. Then $G$ contains a $v$-path of length at least 
$$
\frac{2e(G)-d(v)}{n-1}.
$$
\end{thm}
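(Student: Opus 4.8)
The plan is to deduce Theorem~\ref{Thm:Ning-MSC} in one line from Theorem~\ref{Th:Local-BBRS}. Write $\ell=\ell_G(v)$ for the length of a longest $v$-path in $G$. Every $v$-path through an edge $e$ is in particular a $v$-path of $G$, so $p_v(e)\le\ell$ and hence $1/p_v(e)\ge 1/\ell$ for every $e\in E(G)$. Using this in the inequality of Theorem~\ref{Th:Local-BBRS} and recalling that $|E_v(G)|=d(v)$, we get
\[
\frac{n-1}{2}\ \ge\ \frac12\sum_{e\in E_v(G)}\frac{1}{p_v(e)}+\sum_{e\notin E_v(G)}\frac{1}{p_v(e)}\ \ge\ \frac{1}{\ell}\left(\frac{d(v)}{2}+\bigl(e(G)-d(v)\bigr)\right)=\frac{2e(G)-d(v)}{2\ell},
\]
and rearranging gives $\ell\ge\frac{2e(G)-d(v)}{n-1}$. (Summing this bound over all $v\in V(G)$ and using $\sum_v d(v)=2e(G)$ recovers $\sum_v\ell_G(v)\ge 2e(G)$, i.e.\ Theorem~\ref{Thm:BBRS2003}, applied componentwise if $G$ is disconnected.) Along this route there is no real obstacle left, since everything has been absorbed into Theorem~\ref{Th:Local-BBRS}.

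If instead one wants a self-contained argument, the natural approach is induction on $n$ paralleling the proof of Theorem~\ref{Th:Local-BBRS}. If $v$ is a cut-vertex, let $G_1,\dots,G_k$ be the branches at $v$ (i.e.\ $G_i=G[V(H_i)\cup\{v\}]$ for the components $H_i$ of $G-v$); then $\sum_i e(G_i)=e(G)$, $\sum_i d_{G_i}(v)=d(v)$, $\sum_i(|V(G_i)|-1)=n-1$, and any $v$-path inside a branch is a $v$-path of $G$, so
\[
\ell_G(v)\ \ge\ \max_{1\le i\le k}\frac{2e(G_i)-d_{G_i}(v)}{|V(G_i)|-1}\ \ge\ \frac{\sum_i\bigl(2e(G_i)-d_{G_i}(v)\bigr)}{\sum_i(|V(G_i)|-1)}=\frac{2e(G)-d(v)}{n-1},
\]
using that a maximum of positive fractions is at least the ratio of the sum of numerators to the sum of denominators. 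If $v$ is not a cut-vertex, take a longest $v$-path $P=v_0v_1\cdots v_\ell$ with $v_0=v$, terminus $u=v_\ell$; since $P$ is longest, $N(u)\subseteq V(P)$. Let $v_i$ be the neighbour of $u$ on $P$ of smallest index, $H=G[\{v_i,\dots,v_\ell\}]$ and $h=|V(H)|$. A rotation argument (exactly the content of Claim~\ref{ClHamiltonH}) shows $H$ has a Hamilton path from $v_i$ to every other vertex of $H$, whence every vertex of $V(H)\setminus\{v_i\}$ is itself the terminus of a longest $v$-path; a degree-sum argument then yields $d(v_j)>\ell/2\ge h/2$ and $N(v_j)\subseteq V(H)$ for $j\in[i+1,\ell]$. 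In particular $V(H)\setminus\{v_i\}$ sends no edge to $G-V(H)$, so $e(G)=e(G')+e(H)$ and $d_{G'}(v)=d(v)$ for $G'=G-(V(H)\setminus\{v_i\})$; applying the induction hypothesis to $G'$ (note $|V(G')|=n-h+1$, and $i\ge 1$ forces $h\le\ell$) together with $e(H)\le\binom{h}{2}$ gives
\[
2e(G)-d(v)=\bigl(2e(G')-d_{G'}(v)\bigr)+2e(H)\le(n-h)\ell+h(h-1)\le(n-1)\ell,
\]
as desired; the case $i=0$ (where $uv\in E(G)$, $H=G$, $\ell=n-1$) is checked directly.

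The step I expect to be the real obstacle, in the self-contained version, is the rotation lemma producing the ``Hamilton-connected tail'' $H$, i.e.\ the analogue of Claim~\ref{ClHamiltonH}. One must establish, simultaneously and by a nested induction, (i) which vertices near the end of $P$ are termini of longest $v$-paths, (ii) that each such vertex keeps more than half of its degree inside $H$, and (iii) that each such vertex has no neighbour in the initial segment $P[v_0,v_{i-1}]$; these three facts are mutually dependent, so the rotations and the minimal choice of $P$ (minimising $i$) must be orchestrated carefully. Once that lemma is in place, the edge count on $H$ and the bookkeeping for $G'$ are routine. Since Theorem~\ref{Th:Local-BBRS} is already available, however, I would simply present the short deduction of the first paragraph.
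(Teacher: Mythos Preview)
Your one-line deduction from Theorem~\ref{Th:Local-BBRS} via $p_v(e)\le\ell_G(v)$ is exactly the paper's own argument, given in Remark~\ref{Rem:Theorem-NingMsc-imply-ThmBBRS}(ii). The paper does not supply a self-contained proof of Theorem~\ref{Thm:Ning-MSC} (it is cited from~\cite{N2011}), so your inductive sketch is supplementary; it correctly mirrors the structure of the proof of Theorem~\ref{Th:Local-BBRS} and identifies the rotation lemma (Claim~\ref{ClHamiltonH}) as the one nontrivial step.
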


\begin{remark}\label{Rem:Theorem-NingMsc-imply-ThmBBRS}
(i) We show that Theorem \ref{Thm:Ning-MSC} can imply Theorem \ref{Thm:BBRS2003}.
Recall $\ell_G(v)$ is the length of a longest $v$-path in $G$. By Theorem \ref{Thm:Ning-MSC}, $\ell_{G}(v)\geq \frac{2e(G)-d_G(v)}{n-1}$. Summing all these inequalities over all vertices $v\in V(G)$, we obtain 
$$
\sum_{v\in V(G)}\ell_{G}(v)\geq 2e(G).
$$
If $G$ is disconnected, then we use above inequality to each component.\\
(ii)
We show that Theorem \ref{Th:Local-BBRS} can imply Theorem \ref{Thm:Ning-MSC}, and hence imply Theorem \ref{Thm:BBRS2003} by Remark \ref{Rem:Theorem-NingMsc-imply-ThmBBRS}(i).
Notice that $p_v(e)\leq \ell_G(v)$ for any $e\in E(G)$. By Theorem \ref{Th:Local-BBRS}, we have
$$
\frac{1}{2}\sum_{e\in E_v(G)}\frac{1}{p_v(e)}+\sum_{e\notin E_v(G)}\frac{1}{p_v(e)}\geq \frac{e(G)-\frac{1}{2}d(v)}{\ell_G(v)}.
$$
It follows that
$\frac{e(G)-\frac{1}{2}d(v)}{\ell_G(v)}\leq \frac{n-1}{2},$
which implies
$\ell_G(v)\geq \frac{2e(G)-d(v)}{n-1}$.\\
(iii)
We show that Theorem 
\ref{Th:Local-BBRS} 
implies Theorem \ref{Theorem:MT}(A). As $p_v(e)\leq p(e)$ for any $v\in V(G)$ and $e\in E(G)$,
we have
$$
\frac{n-1}{2}\geq \frac{1}{2}\sum_{e\in E_v(G)}\frac{1}{p_v(e)}+\sum_{e\notin E_v(G)}\frac{1}{p_v(e)}\geq \frac{e(G)-\frac{1}{2}d(v)}{p(e)}.
$$
Summing over all vertices $v\in V(G)$, we obtain
$$
\frac{e(G)}{p(e)}\leq \frac{n}{2}.
$$
As $e$ is chosen arbitrarily, we have
$$
\sum_{e\in E(G)}\frac{1}{p(e)}\leq \frac{n}{2}.
$$
\end{remark}

Next, we shall present a proof of Theorem \ref{Th:Local-Matching}. The proof needs Gallai-Edmonds Structure Theorem and Bondy-Chv\'atal closure theorem on matchings.
\begin{thm}[\cite{G63,G64,E65}]\label{ThMatchingFactorCritical}
 (Gallai–Edmonds Structure Theorem). Let $G= (V,E)$ be a graph with $GE(G) =
 (C,A,D)$. Then each component of $D$ is factor-critical. Also, any maximum matching of $G$ contains
 a perfect matching of C and a near-perfect matching of each component of $D$, and matches all
 vertices of $A$ with vertices in distinct components of $D$. 
\end{thm}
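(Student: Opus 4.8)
This is a classical theorem of Gallai and Edmonds, so the plan is to recall its standard proof, which rests on two ingredients: \emph{Gallai's Lemma} and the \emph{Tutte--Berge deficiency formula}. Call a vertex $v$ of a graph \emph{inessential} if some maximum matching misses it; then $D=D(G)$ is exactly the set of inessential vertices, $A=A(G)=N_G(D)\setminus D$, and $C=C(G)=V\setminus(D\cup A)$. Write $\mathrm{def}(G)=|V|-2\nu(G)$ for the deficiency, and recall that by Tutte--Berge $\mathrm{def}(G)=\max_{S\subseteq V}\bigl(c_o(G-S)-|S|\bigr)$, where $c_o$ counts odd components; a set attaining this maximum is a \emph{barrier}.

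\textbf{Step 1 (Gallai's Lemma).} I would first establish: if $H$ is connected and every vertex of $H$ is inessential, then $H$ is factor-critical. When $\mathrm{def}(H)=1$ this is immediate, since a maximum matching missing a prescribed vertex $v$ is then a perfect matching of $H-v$. For the general case one argues by minimal counterexample: taking $H$ smallest, so that $\mathrm{def}(H)\ge 2$, one selects two inessential vertices at minimum distance among pairs not missed simultaneously by one maximum matching, deletes a vertex between them on a shortest path, and analyses maximum matchings of the resulting graph, obtaining either an augmenting path in $H$ (contradicting maximality) or a strictly smaller connected graph still meeting the hypotheses yet still not factor-critical (contradicting minimality).

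\textbf{Step 2 (the decomposition).} Applying Gallai's Lemma to each component of $G[D]$ --- after the \emph{localization} step that a maximum matching of $G$ restricts to a near-perfect matching on each such component, with its missed vertex ranging over the whole component, both proved by alternating-path exchanges using that $A$ separates $D$ from $C$ --- gives that every component of $G[D]$ is factor-critical, hence odd; a similar argument gives that $G[C]$ has a perfect matching. One then verifies the defect-Hall condition that each $A'\subseteq A$ has at least $|A'|$ neighbouring components in $G[D]$, which produces a matching of $G$ made of a perfect matching of $C$, a system matching $A$ into distinct components of $D$, and a near-perfect matching of each component of $D$; counting its size gives $\mathrm{def}(G)\le c(G[D])-|A|$, while the barrier $A$ gives the reverse inequality by Tutte--Berge, so $\mathrm{def}(G)=c(G[D])-|A|$.

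\textbf{Step 3 (structure of every maximum matching) and the main obstacle.} The same count now pins down an \emph{arbitrary} maximum matching $M$: each odd component of $G[D]$ leaves one of its vertices exposed or matched into $A$, distinct components using distinct vertices of $A$, and $M$ exposes exactly $c(G[D])-|A|$ vertices; tightness of this chain of inequalities forces $M$ to match every vertex of $A$ to a vertex in a distinct component of $D$, to induce a near-perfect matching on each component of $D$, and hence to induce a perfect matching on $C$. The technical heart is Step 1 (Gallai's Lemma) together with the localization claim and the defect-Hall condition in Step 2; all rest on careful augmenting/alternating-path surgery rather than on counting, and once they are in hand the remainder is bookkeeping around the Tutte--Berge formula.
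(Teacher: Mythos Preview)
The paper does not prove this statement at all: Theorem~\ref{ThMatchingFactorCritical} is the classical Gallai--Edmonds Structure Theorem, stated with citations to \cite{G63,G64,E65} and invoked as a black box in the proof of Theorem~\ref{Th:Local-Matching}. There is therefore no ``paper's own proof'' to compare against.

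Your outline is a faithful sketch of the standard textbook proof (Gallai's Lemma via a minimal-counterexample alternating-path argument, then Tutte--Berge to identify $A$ as a barrier, then tightness to pin down the structure of an arbitrary maximum matching). As a proof plan it is sound, but for the purposes of this paper no proof is required or expected; the result is simply quoted.
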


\begin{lem}[Bondy and Chv\'{a}tal~\cite{BC76}]\label{matching-closure}
Let $G$ be a graph on $n$ vertices. For any two non-adjacent
vertices $u,v\in V(G)$, if whenever matching number $\mu(G+uv)=k+1$
and $d_G(u)+d_G(v)\geq 2k+1$, then $\mu(G)=k+1$.
\end{lem}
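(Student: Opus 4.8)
The plan is a standard closure/augmenting-path argument: reduce to the situation where some maximum matching $M'$ of $G$ misses both $u$ and $v$, and then show that the degree hypothesis is violated.

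First I would fix a maximum matching $M$ of $G+uv$, so $|M|=k+1$. If $uv\notin M$, then $M$ is a matching of $G$, whence $\mu(G)\ge k+1$; since $\mu(G)\le\mu(G+uv)=k+1$ trivially, we get $\mu(G)=k+1$ and are done. So assume $uv\in M$ and put $M'=M\setminus\{uv\}$, a matching of $G$ of size $k$ in which $u$ and $v$ are both unmatched. If $M'$ is not a maximum matching of $G$, then $\mu(G)\ge k+1$, and again $\mu(G)=k+1$. Hence the only case to settle is: $uv\notin E(G)$, $M'$ is a maximum matching of $G$ (so $\mu(G)=k$), and $u,v$ are both $M'$-unmatched.

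For that case I would first record two consequences of the maximality of $M'$. Every neighbour of $u$ in $G$ is covered by $M'$ (otherwise adding that edge to $M'$ enlarges it), and likewise every neighbour of $v$; in particular $N_G(u)\cup N_G(v)\subseteq V(M')$. Moreover, no edge $xy\in M'$ can have $x\in N_G(u)$ and $y\in N_G(v)$, since then $u,x,y,v$ would be four distinct vertices forming an $M'$-augmenting path, contradicting maximality. Now I would count: as $N_G(u),N_G(v)\subseteq V(M')$,
\[
d_G(u)+d_G(v)=\sum_{e\in M'}\bigl(|e\cap N_G(u)|+|e\cap N_G(v)|\bigr).
\]
A short inspection of the two endpoints of a fixed edge $e=xy\in M'$ shows that $|e\cap N_G(u)|+|e\cap N_G(v)|\le 2$: a value of $3$ or $4$ would force one endpoint of $e$ to lie in $N_G(u)$ and the other in $N_G(v)$, which is exactly the configuration just excluded. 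Summing over $e\in M'$ gives $d_G(u)+d_G(v)\le 2|M'|=2k$, contradicting the hypothesis $d_G(u)+d_G(v)\ge 2k+1$. This contradiction rules out the last case, so $\mu(G)=k+1$.

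The only point that needs a little care is the per-edge bound, specifically when a vertex lies in $N_G(u)\cap N_G(v)$: if $x$ is such a common neighbour and its $M'$-partner $y$ is adjacent to $u$ or to $v$, one still obtains an $M'$-augmenting path among $\{u,x,y,v\}$, so the pair $\{x,y\}$ again contributes at most $2$. Apart from this brief case check, the argument is entirely routine, and I do not anticipate any serious obstacle.
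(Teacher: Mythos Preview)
Your argument is correct and is essentially the standard Bondy--Chv\'atal closure proof for the matching property. Note, however, that the paper does not supply its own proof of this lemma: it is quoted directly from \cite{BC76} as a known tool and used as a black box in the proof of Theorem~\ref{Th:Local-Matching}. So there is nothing in the paper to compare against beyond the citation; your write-up simply fills in the classical justification, and every step (the reduction to a maximum matching $M'$ of $G$ missing both $u$ and $v$, the observation $N_G(u)\cup N_G(v)\subseteq V(M')$, the exclusion of an $M'$-edge with one end in $N_G(u)$ and the other in $N_G(v)$, and the per-edge count giving $d_G(u)+d_G(v)\le 2k$) is sound.
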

Let $G$ be a graph. 
Fix a positive integer $k \geq 1$. Perform the following graph operation: if there are two non-adjacent
vertices $u$ and $v$ with $d_G(u)+d_G(v)\geq k$, add the new edge $uv$ to $E(G)$. A $k$-\emph{closure} of $G$ is a graph obtained
from $G$ by successively applying this operation as long as possible. It is known that the $k$-closure of $G$ is unique. 

The proof of Theorem 1.1 in \cite{NW20} in fact implies the following stronger
form of lemma.
\begin{lem}\label{Lem:NW}
Let $G$ be an $n$-vertex graph with matching number $\mu=k$. Let $\Gamma$ be
the $(2k+1)$-closure of $G$. Let $C$ be the set of all vertices in $G'$ with
degree at least $k+1$, and $S$ be the set of
vertices in a maximal clique in $G'$ that contains $C$.
Denote $s=|S|$. Set $f(s)=\binom{s}{2} +(2k-s+1)(n-s)$. If $s\leq k$ then
$e(\Gamma)\leq f(k)$. If $k+1\leq s\leq t\leq 2k+1$,
then $e(\Gamma)\leq \max\left\{f(t), f(k+1)\right\}.$
\end{lem}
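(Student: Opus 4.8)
\emph{Proof proposal.} The plan is to extract structural information about $\Gamma$ (the $(2k+1)$-closure of $G$) and then bound $e(\Gamma)$ by splitting $V(\Gamma)$ into the maximal clique $S$ and the rest; we may assume $k\ge 1$ (the case $k=0$ is trivial) and note $n\ge 2k$ always. First I would record that $\mu(\Gamma)=k$: each edge added in forming $\Gamma$ joins two vertices of current degree-sum $\ge 2k+1$, so by Lemma~\ref{matching-closure} no such step can raise the matching number above $k$, and iterating gives $\mu(\Gamma)=\mu(G)=k$. Next, since in $\Gamma$ any two non-adjacent vertices have degree-sum $\le 2k$, any two vertices of $C$ (each of degree $\ge k+1$) are adjacent; hence $C$ is a clique, a maximal clique $S\supseteq C$ exists, and as $\Gamma[S]\cong K_s$ contains a matching of size $\lfloor s/2\rfloor\le\mu(\Gamma)=k$ we get $s\le 2k+1$. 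Writing $\bar S=V(\Gamma)\setminus S$: every $w\in\bar S$ satisfies $d_\Gamma(w)\le k$ (it is not in $C$) and $|N_\Gamma(w)\cap S|\le s-1$ (maximality of $S$); moreover, if $S\supsetneq C$ then any $v\in S\setminus C$ has $s-1\le d_\Gamma(v)\le k$, so $s\le k+1$.

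Next I would dispose of the two easy regimes. If $s\le k$, set $c=|C|\le s\le k$; from $2e(\Gamma[V(\Gamma)\setminus C])+e(C,V(\Gamma)\setminus C)=\sum_{w\notin C}d_\Gamma(w)\le k(n-c)$, together with $e(C,V(\Gamma)\setminus C)\le c(n-c)$ and $e(\Gamma[C])=\binom c2$, one gets $e(\Gamma)\le\binom c2+\tfrac12(c+k)(n-c)$, whose value is non-decreasing in $c$ (since $n\ge 2k\ge k+1$) and hence at most its value at $c=k$, namely $\binom k2+k(n-k)=f(k)-(n-k)<f(k)$. If $s=k+1$, work with $S$: now $|N_\Gamma(w)\cap S|\le\min\{s-1,k\}=k$ for $w\in\bar S$, so both $e(S,\bar S)$ and $\sum_{w\in\bar S}d_\Gamma(w)=2e(\Gamma[\bar S])+e(S,\bar S)$ are $\le k(n-s)$, whence $e(\Gamma)=\binom s2+e(S,\bar S)+e(\Gamma[\bar S])\le\binom{k+1}2+k(n-k-1)=f(k+1)\le\max\{f(t),f(k+1)\}$.

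The remaining case $k+2\le s\le 2k+1$ is where the setup forces $S=C$: a clique of size $s$ in which every vertex has degree $\ge k+1$, with $d_\Gamma(w)\le k$ for every $w\in\bar S$ and $\mu(\Gamma)=k$. Here I would fix a maximum matching $M$ (so $|M|=k$, and $W:=V(\Gamma)\setminus V(M)$ is independent with $|W|=n-2k$ and $|W\cap S|\le 1$) chosen to maximize the number of edges of $M$ lying inside $S$, and apply the Gallai--Edmonds structure theorem (Theorem~\ref{ThMatchingFactorCritical}) to $\Gamma$. Combining the decomposition with exchange arguments along edges of the clique $S$ — an edge $xy\in M$ crossing between $x\in S$ and $y\in\bar S$ cannot have its endpoint $y$ joined to another such crossing-endpoint, nor (generically) to $W$, without exposing an augmenting path — one shows that the edges leaving $S$ and the edges inside $\bar S$ are severely restricted. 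Feeding these restrictions into $e(\Gamma)=\binom s2+e(S,\bar S)+e(\Gamma[\bar S])$, while keeping $d_\Gamma(w)\le k$ on $\bar S$, yields an inequality of the shape $e(\Gamma)\le\max\{f(s),f(k+1)\}$; the two terms match the extremal configurations $K_{2k+1}\cup\overline{K_{n-2k-1}}$ (where $S$ is a factor-critical component of $D$) and $K_{k}\vee\overline{K_{n-k}}$ (where $S$ is essentially the Gallai--Edmonds set $A$). Finally $f(x)=\tfrac32x^2-(n+2k+\tfrac32)x+(2k+1)n$ is convex, so for $k+1\le s\le t\le 2k+1$ we have $f(s)\le\max\{f(k+1),f(t)\}$, and therefore $e(\Gamma)\le\max\{f(t),f(k+1)\}$, as required.

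I expect the case $k+2\le s\le 2k+1$ to be the main obstacle: naive degree counts (using only $e(S,\bar S)\le k(n-s)$ and that $\Gamma[\bar S]$ has few edges) overshoot the target by an additive term, so one must genuinely exploit how maximum matchings of the closed graph $\Gamma$ interact with the large clique $S$. This is precisely the analysis carried out in the proof of \cite{NW20}; the only new bookkeeping here is that the clique of high-degree vertices has size exactly $s$.
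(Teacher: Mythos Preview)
The paper does not actually give a proof of this lemma: the only justification offered is the sentence immediately preceding the statement, ``The proof of Theorem~1.1 in \cite{NW20} in fact implies the following stronger form of lemma.'' So there is no in-paper argument to compare against; the paper simply defers to \cite{NW20}.

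Your proposal is therefore already more explicit than what the paper provides, and it is consistent with that deferral. The preliminary structural observations ($\mu(\Gamma)=k$, $C$ is a clique, $s\le 2k+1$, and $s\le k+1$ whenever $S\supsetneq C$) are correct, and your degree-counting arguments for the regimes $s\le k$ and $s=k+1$ are clean and give the stated bounds exactly. For the remaining range $k+2\le s\le 2k+1$ you correctly identify that naive counting overshoots and that one must exploit the interaction between maximum matchings of $\Gamma$ and the large clique $S$; you then point to \cite{NW20} for that analysis, which is precisely what the paper itself does. The convexity step at the end, replacing $f(s)$ by $\max\{f(t),f(k+1)\}$ for $k+1\le s\le t$, is also fine.

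The only caveat is that your treatment of the hard range is a sketch rather than a proof: the exchange arguments you allude to (crossing matching edges, augmenting paths through $W$, the Gallai--Edmonds decomposition of $\Gamma$) need to be carried out in full to actually yield $e(\Gamma)\le\max\{f(s),f(k+1)\}$. Since the paper's own stance is ``see \cite{NW20},'' this is not a discrepancy with the paper, but if you want a self-contained argument you will have to fill in those details.
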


\begin{lem}\label{Lem:Matching-Stab}
Let $G$ be a graph with order $n$ and matching number $\mu=k$. If $n\leq \frac{5k+1}{2}$ and $e(G)>2k^2$, then $G$ is a subgraph of $K_{2k+1}$ (possibly together with some isolated vertices).
\end{lem}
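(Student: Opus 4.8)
\textbf{Proof proposal for Lemma \ref{Lem:Matching-Stab}.}

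The plan is to run the Bondy--Chv\'atal-type closure machinery on $G$ and then argue by a counting/structure dichotomy. First I would pass to the $(2k+1)$-closure $\Gamma$ of $G$: by Lemma \ref{matching-closure} the matching number is preserved, so $\mu(\Gamma)=k$, and clearly $e(\Gamma)\ge e(G)>2k^2$. Since $n\le \frac{5k+1}{2}$, passing to $\Gamma$ only adds edges and can only make the conclusion ``$G\subseteq K_{2k+1}$ plus isolated vertices'' harder to obtain; but in fact if I can show $\Gamma\subseteq K_{2k+1}$ (plus isolated vertices) then a fortiori $G$ has the same property, since $E(G)\subseteq E(\Gamma)$. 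So it suffices to prove the statement for the closed graph $\Gamma$.

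Next I would invoke Lemma \ref{Lem:NW} with this $\Gamma$. Let $C$ be the set of vertices of $\Gamma$ of degree at least $k+1$, let $S\supseteq C$ be (the vertex set of) a maximal clique containing $C$, and set $s=|S|$. Lemma \ref{Lem:NW} gives: if $s\le k$ then $e(\Gamma)\le f(k)=\binom{k}{2}+(k+1)(n-k)$; and if $k+1\le s\le 2k+1$ then $e(\Gamma)\le \max\{f(s),f(k+1)\}$ (taking $t=s$). I would then show that the bound $e(\Gamma)>2k^2$ together with $n\le\frac{5k+1}{2}$ forces us out of the first regime and into $s=2k+1$. Concretely: when $n\le\frac{5k+1}{2}$ one checks $f(k)=\binom{k}{2}+(k+1)(n-k)\le \binom{k}{2}+(k+1)\cdot\frac{3k+1}{2}$, which is a quadratic in $k$ that is at most $2k^2$ for all $k\ge 1$ (this is the routine inequality I will not grind through here), so $s\le k$ is impossible. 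Similarly, for $k+1\le s\le 2k$ the function $f(s)=\binom{s}{2}+(2k+1-s)(n-s)$, viewed as a function of $s$ on this range with $n\le\frac{5k+1}{2}$, stays at most $2k^2$ — again a finite calculus/quadratic check — so $e(\Gamma)>2k^2$ rules those out too. Hence $s=2k+1$, i.e.\ $\Gamma$ contains a clique $S$ on $2k+1$ vertices with $C\subseteq S$.

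Finally I would argue that all edges of $\Gamma$ lie inside $S$, so $\Gamma$ is $K_{2k+1}$ together with $n-(2k+1)$ isolated vertices. Every vertex outside $S$ has degree at most $k$ (it is not in $C$). If some vertex $x\notin S$ had a neighbour, then since $\mu(\Gamma)=k$ and $S$ already carries a perfect matching on $2k$ of its vertices with one vertex $y\in S$ unmatched, I would look for an augmenting structure: an edge from $x$ to $S$ can be rerouted (using the clique structure of $S$) to produce a matching of size $k+1$ unless blocked, and blocking requires $x$'s neighbourhood to be confined in a way incompatible with $\mu=k$ given $|S|=2k+1$; more simply, a maximum matching misses exactly $n-2k$ vertices, all of which (by Gallai--Edmonds, Theorem \ref{ThMatchingFactorCritical}, since the sole non-trivial component behaves like $K_{2k+1}$) can be taken outside any prescribed edge, and any edge incident to a vertex outside $S$ would then extend such a matching. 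Pushing this through shows every edge is within $S$. Since $E(G)\subseteq E(\Gamma)$, the graph $G$ itself is a subgraph of $K_{2k+1}$ together with isolated vertices, as claimed.

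I expect the main obstacle to be the last step: cleanly ruling out edges incident to vertices outside the clique $S$. The counting in the middle step is a mechanical quadratic estimate, but the structural claim that no edge can ``escape'' $S$ needs care — one must combine the degree bound $d(x)\le k$ for $x\notin S$, the maximality of the clique $S$ in the closure $\Gamma$, and the Gallai--Edmonds description of maximum matchings to derive a contradiction with $\mu(\Gamma)=k$. The closure hypothesis is what makes this possible: in $\Gamma$ a vertex of degree $\le k$ cannot be joined by the closure to anything new, pinning its neighbourhood, and that rigidity is exactly what forces $x$ to be isolated.
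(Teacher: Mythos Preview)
Your approach is essentially the paper's: pass to the $(2k+1)$-closure $\Gamma$, invoke Lemma~\ref{Lem:NW} to force $s=2k+1$, then rule out edges outside the clique. Two corrections are needed.

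First, your inequality in the $s\le k$ regime is wrong. With $n=(5k+1)/2$ one computes
\[
f(k)=\binom{k}{2}+(k+1)\cdot\frac{3k+1}{2}=\frac{4k^2+3k+1}{2}>2k^2,
\]
so Lemma~\ref{Lem:NW} alone does not eliminate $s\le k$. (The paper, for its part, simply writes ``Then $k+1\le s\le 2k$'' and does not treat this case separately.) A clean fix: in the closure any two vertices of degree $\ge k+1$ are adjacent, so $C$ is a clique and $s\ge|C|$. If $|C|\le k$ then, since every vertex outside $C$ has degree at most $k$,
\[
2e(\Gamma)\le |C|(n-1)+(n-|C|)k\le k(n-1-k)+nk=2nk-k-k^2\le (5k+1)k-k-k^2=4k^2,
\]
contradicting $e(\Gamma)>2k^2$. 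Hence $s\ge|C|\ge k+1$, and now the calculation for $k+1\le s\le 2k$ (which does give $f(k+1)=2k^2$ and $f(2k)\le 2k^2$ when $n\le(5k+1)/2$) finishes the dichotomy.

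Second, the last step is not an obstacle; you are overthinking it. Once $\Gamma$ contains a clique $S$ on $2k+1$ vertices, any edge $e\in E(\Gamma)\setminus E(\Gamma[S])$ immediately produces a matching of size $k+1$: if $e=xy$ with $y\in S$, perfectly match the remaining $2k$ vertices of $S$ and add $xy$; if both endpoints of $e$ lie outside $S$, take any matching of size $k$ inside $S$ and add $e$. Either way $\mu(\Gamma)\ge k+1$, a contradiction. No Gallai--Edmonds, no closure rigidity argument is needed --- this is exactly the one-line step the paper uses.
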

\begin{proof}
We use the argument from \cite{NW20} (see the proof of Theorem 1.1 in \cite{NW20}).
Let $G$ be an $n$-vertex graph with $\mu(G)=k$
and $\Gamma$ the $(2k+1)$-closure of $G$. By
Lemma \ref{matching-closure}, we have $\mu(\Gamma)=k$.
Let $C$ be the set of all vertices in $G'$ with
degree at least $k+1$, and let $S$ be the set of
vertices in a maximal clique that contains $C$ in $G'$.
Denote $s=|S|$. Obviously, $s\leq 2k+1$; otherwise
$\mu(\Gamma)\geq k+1$, a contradiction.
Set $f(s)=\binom{s}{2} +(2k-s+1)(n-s)$.

We claim $s=2k+1$. Suppose to the contrary. Then
$k+1\leq s\leq 2k$.
By Lemma \ref{Lem:NW} and setting $t=2k$, we have
$$e(\Gamma)\leq \max\left\{\binom{k}{2}+k(n-k),\binom{2k}{2}+n-2k\right\}.$$
As $n\leq \frac{5k+1}{2}$, we have
$$\binom{k}{2}+k(n-k)\leq 2k^2<e(G),$$
and
$$\binom{2k}{2}+n-2k\leq 2k^2-\frac{3k-1}{2}<e(G).$$
But $e(\Gamma)\geq e(G)$. This contradiction shows that $s=2k+1$.
Recall that $\mu(\Gamma)=k$. Notice that $\Gamma$ has $K_{2k+1}$ as a subgraph.
Let $S$ be such a maximum clique of $\Gamma$. If there is an edge in $E(\Gamma)\backslash E(\Gamma[S])$,
then $\mu(\Gamma)\geq k+1$, a contradiction. Hence $\Gamma$ consists of a $K_{2k+1}$ (possibly together with
$(n-2k-1)$ isolated vertices).
Obviously, $G\subseteq \Gamma$. The proof is complete.
\end{proof}

\noindent
{\bf Proof of Theorem \ref{Th:Local-Matching}.}
If $\mu=1$, then $G$ is a star or a triangle (together with some isolated vertices). One can easily check that the assertion is true. If $\mu=2$, then $G$ is a subgraph of one graph in $\{K_5\cup\overline{K_{n-5}}, 2K_3\cup\overline{K_{n-6}},K_1\vee(K_3\cup\overline{K_{n-4}}), K_2\vee\overline{K_{n-2}}\}$ (see Theorem \ref{ThMatchingFactorCritical}). One can check that the assertion is true. So we assume $\mu\geq 3$. For every edge $uv\in E(G)$, clearly $G-\{u,v\}$ has matching number $\mu-1$ or $\mu-2$. This implies that every edge $e$ has either $\mu(e)=\mu$ or $\mu(e)=\mu-1$. Set $F=\{e\in E(G): \mu(e)=\mu-1\}$. We have
\begin{align}
\sum_{e\in E(G)}\frac{1}{\mu(e)}=\frac{1}{\mu-1}|F|+\frac{1}{\mu}(e(G)-|F|)=\frac{1}{\mu}e(G)+\frac{1}{\mu(\mu-1)}|F|.
\end{align}

Let $M$ be a maximum matching of $G$. For every vertex $x$ saturated by $M$, we will denote by $\pi(x)$ the vertex matched to $x$ in $M$. 

\setcounter{claim}{0}
\begin{claim}\label{Clmuxy}
If $xy\in F$, then both $x,y$ are saturated by $M$ and $\pi(x)\pi(y)\notin E(G)$.
\end{claim}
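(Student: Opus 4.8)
The plan is to argue by contradiction, using the defining property that $xy\in F$ means $\mu(xy)=\mu-1$, i.e. \emph{no} maximum matching of $G$ contains the edge $xy$. In particular $xy\notin M$, so $\pi(x)\neq y$ and $\pi(y)\neq x$ whenever these are defined; and since $M$ is a matching, $\pi(x)\neq\pi(y)$. Thus all vertices appearing below are pairwise distinct, which is the only point requiring (minor) care: it guarantees that the edge swaps really do produce matchings.

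First I would show both endpoints are $M$-saturated. If neither $x$ nor $y$ is saturated by $M$, then $M\cup\{xy\}$ is a matching of size $\mu+1$, contradicting that $\mu(G)=\mu$. If exactly one of them is saturated, say (by symmetry) $y$ while $x$ is not, then $(M\setminus\{y\pi(y)\})\cup\{xy\}$ is a matching of size $\mu$ containing $xy$, whence $\mu(xy)\geq\mu$, contradicting $xy\in F$. Hence $x$ and $y$ are both saturated by $M$, and $\pi(x),\pi(y)$ are well defined.

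For the second assertion, suppose toward a contradiction that $\pi(x)\pi(y)\in E(G)$. Since $x,y,\pi(x),\pi(y)$ are distinct, the set $(M\setminus\{x\pi(x),\,y\pi(y)\})\cup\{xy,\,\pi(x)\pi(y)\}$ is again a matching, of the same size $\mu$, and it contains $xy$. Therefore $\mu(xy)\geq\mu$, again contradicting $xy\in F$. This forces $\pi(x)\pi(y)\notin E(G)$, completing the proof.

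There is essentially no serious obstacle here: the claim is a short augmenting/swapping argument, and the whole content is bookkeeping to see that the exchanged edges form valid matchings of size $\mu$ through $xy$. I would present it in exactly the two-step form above, stating the distinctness of $x,y,\pi(x),\pi(y)$ explicitly at the outset so both steps can invoke it directly.
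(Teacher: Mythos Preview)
Your proof is correct and follows essentially the same approach as the paper's: both establish saturation by an edge-swap $(M\setminus\{y\pi(y)\})\cup\{xy\}$ and then rule out $\pi(x)\pi(y)\in E(G)$ via the double swap $(M\setminus\{x\pi(x),y\pi(y)\})\cup\{xy,\pi(x)\pi(y)\}$. The only difference is cosmetic: you treat the ``neither saturated'' subcase separately and make the distinctness of $x,y,\pi(x),\pi(y)$ explicit, whereas the paper folds the first into ``$y$ must be saturated since $M$ is maximum'' and leaves distinctness implicit.
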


\begin{proof}
If $x$ is not saturated by $M$, then $y$ must be saturated by $M$ since $M$ is a maximum matching. In this case, $M\backslash\{y\pi(y)\}\cup\{xy\}$ is a maximum matching of $G$, which implies that $\mu(xy)=\mu$. So we conclude that $x$, and similarly $y$, is saturated by $M$. Since $\mu(xy)=\mu-1$, we see that $xy\notin M$. If $\pi(x)\pi(y)\in E(G)$, then $M\backslash\{x\pi(x),y\pi(y)\}\cup\{xy,\pi(x)\pi(y)\}$ is a maximum matching of $G$, implying that $\mu(xy)=\mu$. So we conclude that $\pi(x)\pi(y)\notin E(G)$.
\end{proof}

We distinguish the following two cases.

\medskip\noindent\textbf{Case A.} $2\mu\leq n\leq(5\mu+1)/2$.

By Claim \ref{Clmuxy}, every edge $xy\in F$ has its two vertices saturated by $M$ and $\pi(x)\pi(y)\notin E(G)$. If $n=2\mu$, then $e(G)+|F|\leq{n\choose 2}$. It follows that
$$\sum_{e\in E(G)}\frac{1}{\mu(e)}=\frac{1}{\mu}e(G)+\frac{1}{\mu(\mu-1)}|F|
\leq\frac{1}{\mu}(e(G)+|F|)\leq\frac{1}{\mu}{n\choose 2}=n-1,$$
and the equality holds if and only if $F=\emptyset$ and $e(G)={n\choose 2}$. That is, $G\cong K_n$. Now we assume that $2\mu+1\leq n\leq(5\mu+1)/2$.

By Claim \ref{Clmuxy}, for each two edges $x\pi(x),y\pi(y)\in M$, at most one edge of $xy,\pi(x)\pi(y)$ is in $F$ and at most one edge of $x\pi(y),\pi(x)y$ is in $F$. It follows that $|F|\leq 2{\mu\choose 2}=\mu(\mu-1)$. If $e(G)\leq 2\mu^2$, then
$$\sum_{e\in E(G)}\frac{1}{\mu(e)}=\frac{1}{\mu}e(G)+\frac{1}{\mu(\mu-1)}|F|
\leq\frac{2\mu^2}{\mu}+\frac{\mu(\mu-1)}{\mu(\mu-1)}=2\mu+1,$$
as desired. Moreover, the equality holds only if $|F|=\mu(\mu-1)$ and $e(G)=\mu^2$. This implies that all edges $G[V(M)]$ is either in $F$ or in $M$. Specially $e_G(V(M))=|M|+|F|=\mu^2$. Let $U=V(G)\backslash V(M)$. So $|U|=n-2\mu\leq(\mu+1)/2$. Note that $E_G(U)=\emptyset$, implying that $e_G(V(M),U)=\mu^2$. For any edge $x\pi(x)\in M$, $x$ and $\pi(x)$ cannot neighbor two distinct vertices in $U$, which implies that $x\pi(x)$ is adjacent to at most $|U|\leq(\mu+1)/2$ edges in $E_G(V(M),U)$. Thus $e_G(V(M),U)\leq\mu(\mu+1)/2<\mu^2$, a contradiction.

Now we assume that $e(G)>2\mu^2$. By Lemma \ref{Lem:Matching-Stab}, $G$ is a subgraph of $K_{2\mu+1}\cup\overline{K_{n-2\mu-1}}$. Notice that for every $xy\in F$, $\pi(x)\pi(y)\notin E(G)$. It follows that $e(G)+|F|\leq{2\mu+1\choose 2}$. Thus
$$\sum_{e\in E(G)}\frac{1}{\mu(e)}=\frac{1}{\mu}e(G)+\frac{1}{\mu(\mu-1)}|F|\leq\frac{1}{\mu}(e(G)+|F|)\leq\frac{1}{\mu}{2\mu+1\choose 2}=2\mu+1,$$ as desired. Moreover, the equality holds if and only if $F=\emptyset$ and $e(G)={2\mu+1\choose 2}$. That is, $G\cong K_{2\mu+1}\cup\overline{K_{n-2\mu-1}}$.

\medskip\noindent\textbf{Case B.} $n\geq 5\mu/2+1$.

By Theorem \ref{ThMatchingFactorCritical}, there is a set $S\subseteq V(G)$ such that every component of $G-S$ is factor-critical and $S$ is matched to $\mathcal{C}_{G-S}$. Let $S=\{x_1,\ldots,x_s\}$, and let $H_1,\ldots,H_t$ be the components of $G-S$. Set $h_i=|V(H_i)|$, where $i=1,\ldots,t$.

Recall that $M$ is a maximum matching. We have that $M$ consists of $s$ edges from $S$ to different components of $G-S$, and nearly perfect matchings for all $H_i$, $i\in[1,t]$. We have that $n-2\mu=t-s$. Without loss of generality, we can assume that $\pi(x_i)\in V(H_i)$, $i\in[1,s]$. 

Let $G'$ be the graph obtained from $G$ by adding all missing edges in
$$\{xx': x,x'\in S\}\cup\{xy: x\in S, y\in V(G)\backslash S\}\cup\bigcup_{i=1}^t\{yy': y,y'\in V(H_i)\}.$$
 We remark that each $S\cup V(H_i)$ is a clique of $G'$. We set $A=E(G')\backslash E(G)$ the set of edges added in $G'$.
 Notice that $H_i$ is factor-critical, every edge $x_iy_i$ with $y_i\in V(H_i)$ is contained in a maximum matching of $G$, i.e., $x_iy_i\notin F$.

 \begin{claim}\label{ClFiAi}
 (1) $F\cap E_G(S,H_i)=\emptyset$ for all $i\in[s+1,t]$.\\
 (2) Let $F'_i=F\cap E_G(S,V(H_i))$, $i\in[1,s]$, and $A'_i=A\cap E_{G'}(x_i, V(G)\backslash(S\cup V(H_i)))$. We have $|F'_i|\leq(\mu-1)|A'_i|$, and the equality holds if and only if $F'_i=A'_i=\emptyset$.\\
 (3) Let $F_i=F\cap E(H_i)$, and $A_i=A\cap E_{G'}(\{x_i\}\cup V(H_i))$ if $i\in[1,s]$; $A_i=A\cap E_{G'}(V(H_i))$ if $i\in[s+1,t]$. We have $|F_i|\leq|A_i|$.
 \end{claim}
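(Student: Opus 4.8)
## Proof proposal for Claim 3.5

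The plan is to prove the three parts of Claim \ref{ClFiAi} by careful local analysis around each component $H_i$ of $G-S$, using Claim \ref{Clmuxy} (if $xy\in F$ then $x,y$ are $M$-saturated and $\pi(x)\pi(y)\notin E(G)$) as the workhorse, together with the structure of $M$ guaranteed by the Gallai--Edmonds theorem.

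\medskip\noindent\textbf{Part (1).} For $i\in[s+1,t]$, the component $H_i$ is not matched by $M$ to any vertex of $S$; since $H_i$ is factor-critical, $M$ restricted to $V(H_i)$ misses exactly one vertex, call it $z_i$. Take any edge $x_jy\in E_G(S,V(H_i))$ with $y\in V(H_i)$. I would show $\mu(x_jy)=\mu$ by exhibiting a maximum matching containing $x_jy$: start from $M$, remove the $M$-edge at $x_j$ and the $M$-edge of $H_i$ incident to $y$ (if $y\ne z_i$), and then rematch. The key point is that a component of $G-S$ that is unmatched to $S$ in $M$ has ``spare capacity'': one can absorb the newly exposed vertex inside $H_i$ using factor-criticality. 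Since $x_jy$ then lies in a maximum matching, $x_jy\notin F$, giving $F\cap E_G(S,H_i)=\emptyset$.

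\medskip\noindent\textbf{Part (2).} Fix $i\in[1,s]$. Every edge of $F'_i=F\cap E_G(S,V(H_i))$ has the form $x_jy$ with $y\in V(H_i)$, $j\ne i$ (the case $j=i$ is excluded since $x_iy_i\notin F$ as $H_i$ is factor-critical). By Claim \ref{Clmuxy}, $\pi(x_j)\pi(y)\notin E(G)$; since $\pi(x_j)\in V(H_j)$ and $\pi(y)\in V(H_i)$ (or $\pi(y)\in S$ if $y=z_i$-type exposed vertex — I'd need to check which vertices of $H_i$ are $M$-saturated), these non-edges $\pi(x_j)\pi(y)$ lie in $E_{G'}(V(H_j),V(H_i))\subseteq A$ or similar. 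The idea is to inject $F'_i$ into $A'_i=A\cap E_{G'}(x_i,V(G)\setminus(S\cup V(H_i)))$: the map $x_jy\mapsto$ (the $G'$-edge from $x_i$ to $\pi(x_j)$ or to $\pi(y)$) should be well-defined and at most $(\mu-1)$-to-one, because $x_i$ has at most $\mu-1$ relevant neighbors to map into once $S\cup V(H_i)$ is excluded, or because for a fixed target the preimages correspond to distinct choices among the $\le\mu-1$ edges of $M$ outside $H_i$. The equality case $|F'_i|=(\mu-1)|A'_i|$ forcing $F'_i=A'_i=\emptyset$ should follow since $|A'_i|\ge 1$ would require $\mu-1$ preimages, but a closer count of how many edges $x_jy$ can map to one target is strictly less than $\mu-1$ unless everything is empty. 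The main obstacle is pinning down exactly which vertices of $H_i$ are $M$-saturated and making the injection land correctly in $A'_i$ rather than in some other part of $A$; this bookkeeping of indices (distinguishing $y=\pi(x_i)$, $y$ saturated inside $H_i$, or $y$ the exposed vertex) is where the care is needed.

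\medskip\noindent\textbf{Part (3).} Fix $i$ and consider $F_i=F\cap E(H_i)$. For $xy\in F_i$ with $x,y\in V(H_i)$, Claim \ref{Clmuxy} gives $\pi(x)\pi(y)\notin E(G)$. When $i\in[s+1,t]$, both $\pi(x),\pi(y)\in V(H_i)$ (the missed vertex $z_i$ is not an endpoint of an $F$-edge, again because an edge at an exposed vertex extends to a maximum matching), so $\pi(x)\pi(y)$ is a non-edge inside $V(H_i)$, i.e., an element of $A_i=A\cap E_{G'}(V(H_i))$. When $i\in[1,s]$, one of $\pi(x),\pi(y)$ could be $x_i$, and then $\pi(x)\pi(y)=x_iz\in E_{G'}(\{x_i\}\cup V(H_i))=A_i$; otherwise it is again an internal non-edge of $H_i$, also in $A_i$. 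So the map $xy\mapsto\pi(x)\pi(y)$ sends $F_i$ into $A_i$; I would then argue this map is injective — two distinct $F$-edges $xy$, $x'y'$ in $H_i$ whose $\pi$-images coincide would force $\{x,y\}=\{x',y'\}$ since $\pi$ is a bijection on $M$-saturated vertices — giving $|F_i|\le|A_i|$.

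\medskip Once Claim \ref{ClFiAi} is established, the three inequalities will be summed over $i$ and combined with $|F|=\sum_i|F_i|+\sum_{i\le s}|F'_i|$ and $e(G')\le f(\text{something})$ to bound $|F|$ in terms of $e(G')-e(G)=|A|$, which feeds back into identity (3.1) to finish Case B. The hardest part of the present claim is Part (2): getting the constant $\mu-1$ exactly right in the injection and nailing the equality characterization, since it hinges on a precise count of how many $F$-edges from $S$ into $H_i$ can be ``charged'' to a single added edge at $x_i$.
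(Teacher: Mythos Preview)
Your treatment of Parts (1) and (3) is essentially the paper's proof: for (1) you use factor-criticality of the unmatched component to reroute $M$ through $x_jy$, and for (3) you use the injection $xy\mapsto\pi(x)\pi(y)$ from $F_i$ into $A_i$, which is exactly what the paper does.

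Part (2), however, has a real gap, and your approach diverges from the paper's in a way that does not work. Your proposed injection $x_jy\mapsto x_i\pi(x_j)$ (the only sensible target, since $\pi(y)\in\{x_i\}\cup V(H_i)$ lies inside $S\cup V(H_i)$) need not land in $A'_i$: nothing prevents $x_i\pi(x_j)$ from being an actual edge of $G$. Even granting that it lands in $A'_i$, the preimage of a fixed target $x_i\pi(x_j)$ consists of all edges of $F'_i$ with $S$-endpoint $x_j$, and there can be up to $h_i$ of these; $h_i$ is bounded only by $2\mu-1$, not by $\mu-1$, so the ``$(\mu-1)$-to-one'' claim is unjustified. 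More tellingly, your argument never invokes the Case~B hypothesis $n\geq 5\mu/2+1$, and without it the inequality $|F'_i|\leq(\mu-1)|A'_i|$ can genuinely fail.

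The paper's argument for (2) is not an injection at all but a two-step counting bound. First, if $F'_i\neq\emptyset$ then $x_i$ can have \emph{no} neighbor in any $H_j$ with $j>s$ (otherwise rematch $x_i$ into $H_j$, which frees $H_i$ from $S$ and forces $F'_i=\emptyset$ by Part~(1) applied with the new matching); hence every $G'$-edge from $x_i$ to $\bigcup_{j>s}V(H_j)$ is in $A'_i$, giving $|A'_i|\geq t-s=n-2\mu\geq\mu/2+1$. Second, letting $S_0\subseteq S\setminus\{x_i\}$ be the $S$-endpoints of $F'_i$-edges, one has $|F'_i|\leq |S_0|\,h_i$, and since $\mu\geq(h_i+1)/2+|S_0|$ the AM--GM inequality gives $|F'_i|<\mu^2/2$. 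Finally $\mu^2/2<(\mu-1)(\mu/2+1)\leq(\mu-1)|A'_i|$ for $\mu\geq 3$, with strict inequality throughout, which also delivers the equality characterization. The key idea you are missing is the first step: $F'_i\neq\emptyset$ forces $x_i$ to miss \emph{all} of the spare components, and that is where the large lower bound on $|A'_i|$ comes from.
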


 \begin{proof}
 (1) Let $x_iy_j\in E(G)$ with $i\in[1,s]$, $j\in[s+1,t]$. We will show that $x_iy_j\notin F$. Let $M_j=M\cap E_G(H_j)$. Since $H_j$ is factor-critical, $H_j-y_j$ has a perfect matching $M'_j$. Now $M'=(M\backslash(M_j\cup\{x_i\pi(x_i)\})\cup(M'_j\cup\{x_iy_j\})$ is also a maximum matching of $G$. This implies $x_iy_j\notin F$.

 (2) If $F'_i=\emptyset$, then the assertion is trivial. Now suppose $F'_i\neq\emptyset$. If $x_i$ has a neighbor $y_j$ in $H_j$ with $j\in[s+1,t]$, then $G$ has a maximum matching $M'$ with $x_iy_j\in M'$ and $M'\cap E_G(S,V(H_i))=\emptyset$. From (1) we see that $F'_i=\emptyset$, a contradiction. So we conclude that $x_i$ is not adjacent to all vertices in $H_j$ with $j\in[s+1,t]$. Thus, $|A'_i|\geq t-s=n-2\mu\geq\mu/2+1$.

 % Suppose now that $x_i$ is adjacent to all vertices in $\bigcup\{V(H_j): j\in[1,s]\backslash\{i\}\}$. For any edge $x_jy_i\in E_G(S,V(H_i))$, $G$ has a maximum matching containing $x_jy_i$ (and $x_i\pi(y_j)$). This implies that $F'_i=\emptyset$, a contradiction. So we conclude that $x_i$ is nonadjacent to at least one vertex in $V(H_j)$ with $j\in[1,s]\backslash\{i\}$. It follows that $|A'_i|\geq t-s+1=n-2\mu+1\geq\mu/2+1$.

 Let $S_0=\{x_j\in S: x_j\mbox{ is incident to some edge in }F'_i\}$. Then $x_i\notin S_0$ and $F'_i\subseteq E_G(S_0,V(H_i))$. It follows that $|F'_i|\leq s_0h_i$, where $s_0=|S_0|$. Notice that $\mu=|M|\geq(h_i+1)/2+s_0>h_i/2+s_0$. It follows that $|F'_i|\leq s_0h_i<\mu^2/2$ (using mean value inequality).

 Now, it suffices to show that $\mu^2/2<(\mu-1)(\mu/2+1)$, which is true when $\mu\geq 3$.
 
 (3) Notice that for every edge $yy'\in F_i$, $\pi(y),\pi(y')\in\{x_i\}\cup V(H_i)$ if $i\in[1,s]$, and $\pi(y),\pi(y')\in V(H_i)$ if $i\in[s+1,t]$. By Claim \ref{Clmuxy}, we see that $\pi(y)\pi(y')\notin E(G)$, and thus the assertion $|F_i|\leq|A_i|$ follows. 
 \end{proof}

 We define $F_i,A_i,F'_i,A'_i$ as in Claim \ref{ClFiAi}. Recall that all edges from $x_i$ to $H_i$ or $H_j$ with $j\in[s+1,t]$, are not in $F$. It follows that $F=E(G[S])\cup\bigcup_{i=1}^tF_i\cup\bigcup_{i=1}^sF'_i$. Now we have
 \begin{align*}
 \sum_{e\in E(G)}\frac{1}{\mu(e)} & =\frac{1}{\mu}e(G)+\frac{1}{\mu(\mu-1)}|F|\\
    & =\frac{1}{\mu}e(G)+\frac{1}{\mu(\mu-1)}\left(e(G[S])+\sum_{i=1}^t|F_i|+\sum_{i=1}^s|F'_i|\right)\\
    & \leq\frac{1}{\mu}e(G)+\frac{1}{\mu(\mu-1)}\left(e(G[S])+\sum_{i=1}^t|A_i|+(\mu-1)\sum_{i=1}^s|A'_i|\right)\\
    & \leq\frac{1}{\mu}\left(e(G)+\sum_{i=1}^t|A_i|+\sum_{i=1}^s|A'_i|\right)+\frac{1}{\mu(\mu-1)}e(G[S])\\
    & \leq\frac{1}{\mu}e(G')+\frac{1}{\mu(\mu-1)}e(G'[S]).
 \end{align*}
Moreover, the equality holds if and only if $F_i=A_i=\emptyset$ for all $i\in[1,t]$, $F'_i=A'_i=\emptyset$ for all $i\in[1,s]$, and $e(G)=e(G')$, implying that $G=G'$.

Let $G''$ be the graph obtained from $G'$ by removing $h_i-1$ vertices from $H_i$ to $H_1$ for all $i\in[2,t]$, i.e., $G''\cong K_s\vee(K_{n-s-t+1}\cup\overline{K_{t-1}})$. Then $e(G')\leq e(G'')$, and
 \begin{align*}
 \frac{1}{\mu}e(G')+\frac{1}{\mu(\mu-1)}e_{G'}(S) 
    & \leq\frac{1}{\mu}e(G'')+\frac{1}{\mu(\mu-1)}e_{G''}(S)\\
    & =\frac{1}{\mu}\left(s(n-s)+{2\mu-2s+1\choose 2}\right)+\frac{1}{\mu-1}{s\choose 2}
 \end{align*}
(recall that $n-2\mu=t-s$). Note that $0\leq s\leq\mu$. The above formula takes the maximum value when $s=0$ or $s=\mu$. It follows that
 \begin{align*}
 \frac{1}{\mu}e(G'')+\frac{1}{\mu(\mu-1)}e_{G''}(S) 
    & \leq\max\left\{\frac{1}{\mu}{2\mu+1\choose 2},\frac{1}{\mu}\mu(n-\mu)+\frac{1}{\mu-1}{\mu\choose 2}\right\}\\
    & =\max\left\{2\mu+1,n-\frac{\mu}{2}\right\}.
 \end{align*}
with equality if and only if $n=5\mu/2+1$ and $G''\cong K_{2\mu+1}\cup\overline{K_{n-2\mu-1}}$, or $n\geq 5\mu/2+1$ and $G''\cong K_{\mu}\vee\overline{K_{n-\mu}}$. This proves the theorem.
\qed

\vspace{2mm}
\noindent
{\bf Proof of Theorem \ref{Thm:Local-generalized-Turan}.}
(i) We prove the theorem by induction on $s$. The case $s=2$ is
Theorem \ref{Theorem:MT}. 
Suppose the proposition is true for $s$. Now we prove the case of $s+1$. For any vertex $x\in V(G)$, we denote
$G_x$ by the subgraph of $G$ induced by $N_G(x)$, and by $p(S,G_x)$
the maximum length of a path $P$ in $G_x$ such that $S\subset V(P)$
and vertices of $S$ appear in $P$ in the consecutive order. Then, we
have
$$\sum_{x\in V(G)}\sum_{S\in N_s(G_x)}\frac{1}{p(S,G_x)-s+2}\leq \sum_{x\in V(G)}\frac{n_{s-1}(G_x)}{s}.$$
Observe that $\sum_{x\in V(G)}n_{s-1}(G_x)=sn_s(G)$ and $\sum_{x\in V(G)}n_{s}(G_x)=(s+1)n_{s+1}(G)$.

We claim $p(S,G)\geq p(S,G_x)+1$. Set $t=p(S,G_x)$. In fact, assume that $P=v_1v_2\ldots v_t$ is the required
path of $G_x$ such that $T=\{v_i,v_{i+1},\ldots,v_{i+t}\}$ appears on $P$ in the consecutive order and
$T$ induces a clique of size $s$. Then $P'=xv_1\cup P$ is a path of length $|P|+1$ such that $T$ appears
on $P'$ in the consecutive order and induces a $K_s$. It follows $p(S,G)\geq p(S,G_x)+1$.

Thus, we have
\begin{align*}
&\sum_{x\in V(G)}\sum_{S\in N_s(G_x)}\frac{1}{p(S,G_x)-s+2}\\
&\geq \sum_{x\in V(G)}\sum_{S\in N_s(G_x)}\frac{1}{p(S,G)-s+1}\\
&=\sum_{S\in N_{s+1}(G)}\frac{s+1}{p(S,G)-s+1}.
\end{align*}
Hence, together with the above two inequalities, we have
$$
\sum_{S\in N_{s+1}(G)}\frac{s+1}{p(S,G)-s+1}\leq n_s(G),
$$
which implies that
$$
\sum_{S\in N_{s+1}(G)}\frac{1}{p(S,G)-s+1}\leq \frac{n_s(G)}{s+1}.
$$
The proof is complete.

(ii) Before the proof, we first prove a lemma.
\begin{lem}
Let $G$ be a graph with $\omega=\omega(G)$. Then for any $1\leq s\leq \omega(G)$,
we have
$$\triangle(G)\geq \frac{(s+1)n_{s+1}(G)}{n_s(G)}+s-1.$$
\end{lem}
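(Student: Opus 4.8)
The statement to prove is the inequality $\triangle(G) \ge \frac{(s+1)n_{s+1}(G)}{n_s(G)} + s - 1$ for every graph $G$ and every $1 \le s \le \omega(G)$. I would prove this by a double-counting argument relating $(s{+}1)$-cliques to $s$-cliques through a vertex of maximum degree, using the same induced-neighborhood reduction that appears in the proof of part (i).

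First I would set $\Delta = \triangle(G)$ and, for each vertex $x \in V(G)$, write $G_x$ for the subgraph induced by $N_G(x)$, exactly as in the proof of Theorem \ref{Thm:Local-generalized-Turan}(i). The key counting identities are $\sum_{x \in V(G)} n_{s-1}(G_x) = s\, n_s(G)$ — since each $s$-clique $K$ contributes the $(s{-}1)$-cliques $K \setminus \{x\}$ to $G_x$ for each of its $s$ vertices $x$ — and likewise $\sum_{x \in V(G)} n_s(G_x) = (s{+}1)\, n_{s+1}(G)$. These are already recorded in the excerpt, so I may cite them directly. The second step is the trivial bound $n_s(G_x) \le \binom{\Delta}{s}$? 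No — that is too crude; instead I would use the cleaner observation that $G_x$ has at most $\Delta$ vertices, hence $n_{s-1}(G_x) \ge$ is not what I want either. The right move: since every $(s{+}1)$-clique of $G$ containing $x$ restricts to an $s$-clique in $G_x$, we get $n_s(G_x) \cdot \binom{?}{}$... let me instead argue directly. For a fixed vertex $x$, every $s$-clique of $G_x$ together with $x$ forms an $(s{+}1)$-clique of $G$, and conversely; so $n_s(G_x)$ counts the $(s{+}1)$-cliques through $x$. Summing over $x$ gives $(s{+}1)n_{s+1}(G) = \sum_x n_s(G_x)$. Similarly each $s$-clique through $x$ gives rise to $n_{s-1}$-cliques in $G_x$, and $\sum_x n_{s-1}(G_x) = s\, n_s(G)$.

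Now the heart of the argument is a pointwise inequality inside each $G_x$: I claim $n_s(G_x) \le \frac{\Delta - s + 1}{s}\, n_{s-1}(G_x)$, or equivalently that in any graph $H$ on at most $\Delta$ vertices one has $s\, n_s(H) \le (\Delta - s + 1)\, n_{s-1}(H)$. This follows from counting incident pairs $(K', K)$ with $K' \in N_{s-1}(H)$, $K \in N_s(H)$, $K' \subset K$: each $K$ has exactly $s$ such $K'$, and each $K'$ is contained in at most $|V(H)| - (s-1) \le \Delta - s + 1$ such $K$ (the extra vertex lies among the remaining vertices of $H$). Plugging this into the summed identities yields $(s{+}1)n_{s+1}(G) = \sum_x n_s(G_x) \le \frac{\Delta - s + 1}{s} \sum_x n_{s-1}(G_x) = \frac{(\Delta - s + 1)}{s}\cdot s\, n_s(G) = (\Delta - s + 1)\, n_s(G)$, and rearranging gives precisely $\Delta \ge \frac{(s+1)n_{s+1}(G)}{n_s(G)} + s - 1$ (valid since $n_s(G) > 0$ as $s \le \omega(G)$).

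**Main obstacle.** The argument is essentially routine once the two counting identities and the pointwise clique inequality are in place; the only subtle point is making sure the per-vertex bound $|V(G_x)| = d_G(x) \le \Delta$ is applied correctly and that $n_{s-1}(G_x)$ is nonzero for at least one $x$ (guaranteed because $G$ has an $s$-clique, indeed an $\omega$-clique, so some $G_x$ contains an $(s{-}1)$-clique). I would also double-check the degenerate case $s = 1$: then the claimed bound reads $\triangle(G) \ge 2\,n_2(G)/n_1(G) = 2e(G)/n$, which is the trivial averaging bound on the maximum degree, so the lemma is consistent at the base. I expect no genuine difficulty beyond bookkeeping.
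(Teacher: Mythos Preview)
Your argument is correct and is essentially the same double-counting as the paper's: both establish $(s+1)n_{s+1}(G)\le(\Delta-s+1)n_s(G)$ by bounding, for each $s$-clique, the number of $(s+1)$-cliques extending it by $\Delta-s+1$. The paper does this directly (for an $s$-clique $S$, the common neighbourhood $\bigcap_{x\in S}N_G(x)$ has size at most $\Delta-(s-1)$), whereas your detour through the graphs $G_x$ amounts to the same bound summed once per vertex of each $s$-clique, introducing and then cancelling an extra factor of $s$; the paper's formulation is shorter but the content is identical.
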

\begin{proof}
For any $s$-clique $S$, the number of $(s+1)$-cliques through $S$ equals to $\cap_{x\in S}N_G(x)$.
Note that $\cap_{x\in S}N_G(x)\leq |N_G(x)\backslash (S-\{x\})|\leq \triangle(G)-s+1$.
Thus, $n_{s+1}(G,S)\leq \triangle(G)-s+1$. It follows that $\sum_{S\in N_s(G)}n_{s+1}(G,S)=(s+1)n_{s+1}(G)\leq n_s(G)(\triangle(G)-s+1)$.
Hence, $\triangle(G)\geq \frac{(s+1)n_{s+1}(G)}{n_s(G)}+s-1$. The proof is complete.
\end{proof}

The proof of Theorem \ref{Thm:Local-generalized-Turan}(B) is similar to that
of Theorem \ref{Thm:Local-generalized-Turan}(A). The only new part is the following inequality. We omit the details.
\begin{lem}
Let $G$ be a graph and $x\in V(G)$. Then we have
$$
s(K,G_x)+1\leq s(K,G).
$$
\end{lem}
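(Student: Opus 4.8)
The plan is to prove the inequality
$$
s(K,G_x)+1\le s(K,G)
$$
directly from the definitions, exactly mirroring the path argument used in part (i). Recall that for a clique $K$ in a graph $H$, $s(K,H)$ denotes the maximum size of a star in $H$ whose vertex set contains $K$; equivalently, it is $1+\max\{d_H(z): z\in V(H),\ K\subseteq \{z\}\cup N_H(z)\}$, the maximum being over centers $z$ whose closed neighborhood swallows $K$ (note the center need not lie in $K$). First I would fix $x\in V(G)$, recall $G_x=G[N_G(x)]$, and let $K$ be an $s$-clique contained in $V(G_x)$, i.e. $K\subseteq N_G(x)$. Let $S^\ast$ be a star of $G_x$ of maximum size with $K\subseteq V(S^\ast)$, say with center $z$; then every vertex of $V(S^\ast)$ other than $z$ is a neighbor of $z$ in $G_x$, hence also in $G$, and all of them lie in $N_G(x)$.

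The key step is to enlarge $S^\ast$ by one leaf using the apex vertex $x$. Since $z\in N_G(x)$, the edge $xz$ is present in $G$. Now consider two cases. If $x\notin V(S^\ast)$, then $S^\ast\cup\{xz\}$ is a star in $G$ with the same center $z$, containing $K$ (as $K\subseteq V(S^\ast)$), and with one more leaf; hence $s(K,G)\ge |E(S^\ast)|+1=s(K,G_x)+1$. If instead $x\in V(S^\ast)$, then $x$ is already a leaf of $S^\ast$ with center $z$; but in that situation I can pick any vertex $u\in N_G(x)$ — which is nonempty since $z\in N_G(x)$ — and observe more carefully: actually the cleaner route is to note $K\subseteq N_G(x)$ forces $x\notin K$ (a vertex is not its own neighbor in a simple graph), and to redo the construction centered at $x$. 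Indeed, $x$ is adjacent in $G$ to every vertex of $K$ and to every leaf of $S^\ast$, so the star centered at $x$ with leaf set $V(S^\ast)$ (removing $x$ itself from the leaf set if it was there, and noting $z$ is still a valid leaf) has at least $|E(S^\ast)|$ leaves among $N_G(x)$, and since $V(G_x)\subseteq N_G(x)$ is a proper subset of what $x$ can reach only if... — to avoid this subtlety I would simply present the argument as: the star $\widehat S$ with center $x$ and leaf set $K\cup (V(S^\ast)\setminus\{x\})$ lies in $G$, contains $K$, and has at least $\max(|E(S^\ast)|,\ s)$ edges, but this does not obviously beat $s(K,G_x)$ by one unless $z$ contributes a new leaf.

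The honest and simplest argument, which I would actually write, is the first one: choose the maximum star $S^\ast$ in $G_x$ realizing $s(K,G_x)$, with some center $z\in N_G(x)$, and simply observe that $x\notin V(S^\ast)$ is automatic — every vertex of $V(S^\ast)$ is in $V(G_x)=N_G(x)$, and $x\notin N_G(x)$, so $x$ is genuinely a new vertex. Then $S^\ast+xz$ is a star in $G$ (center $z$, one extra leaf $x$) with $K\subseteq V(S^\ast)\subseteq V(S^\ast+xz)$, giving $s(K,G)\ge s(K,G_x)+1$. That is the whole proof, and the anticipated obstacle — whether $x$ might already belong to the optimal star — dissolves precisely because the optimal star lives entirely inside $N_G(x)$, which cannot contain $x$. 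With this lemma in hand, the proof of part (ii) then runs verbatim as in part (i): sum $\frac{1}{s(K,G_x)-s+2}$ over all $(s)$-cliques $K\in N_s(G_x)$ and all $x$, apply the inductive hypothesis componentwise together with $\sum_x n_{s-1}(G_x)=s\,n_s(G)$ and $\sum_x n_s(G_x)=(s+1)n_{s+1}(G)$, use $s(K,G_x)-s+2\ge s(K,G)-s+1$ from the lemma to pass from the left side to $\sum_{K\in N_{s+1}(G)}\frac{s+1}{s(K,G)-s+1}$, and divide through by $s+1$.
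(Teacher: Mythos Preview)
Your proof is correct and is precisely the argument the paper has in mind: the paper omits the details and simply points to the analogy with part~(i), and your construction (take an optimal star $S^\ast$ in $G_x$ with center $z\in N_G(x)$, observe $x\notin V(S^\ast)$ since $V(S^\ast)\subseteq N_G(x)$, and adjoin the leaf $x$ via the edge $xz$) is exactly that analogue. The detour in the middle through the case $x\in V(S^\ast)$ is unnecessary, as you yourself note at the end; the final paragraph alone is the clean proof.
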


The case $s=2$ is the following proposition (see Proposition 1 in \cite{MT2023}).
\begin{proposition}[Malec-Tompkins, 2023 \cite{MT2023}]
Let $G$ be any $n$-vertex graph, then
$$
\sum_{e\in E(G)}\frac{1}{s(e)}\leq \frac{n}{2},
$$
where $s(e)$ denotes the size of a maximum star containing the edge $e$.
\end{proposition}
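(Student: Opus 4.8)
The plan is to give a ``star double cover'' argument, exactly parallel to the proof of Theorem \ref{Thm:Weight-MT} via the small path double cover. First I would record the elementary observation that for an edge $e=uv$, any star of $G$ containing $e$ is centered either at $u$ or at $v$; hence its maximum possible size is $s(e)=\max\{d_G(u),d_G(v)\}$. In particular, for every vertex $v$ and every edge $e$ incident to $v$ we have $s(e)\geq d_G(v)$, since the star consisting of all edges at $v$ is one admissible candidate through $e$.

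Next, for each vertex $v$ let $S_v$ denote the star consisting of all $d_G(v)$ edges incident to $v$ (an empty star if $v$ is isolated). The collection $\{S_v:v\in V(G)\}$ consists of exactly $n$ stars and covers $E(G)$ so that each edge $e=uv$ lies in exactly two of them, namely $S_u$ and $S_v$. Summing $1/s(e)$ over the edges of each $S_v$ and invoking $s(e)\geq d_G(v)$ for $e\in E(S_v)$,
\begin{align*}
2\sum_{e\in E(G)}\frac{1}{s(e)}
  &=\sum_{v\in V(G)}\sum_{e\in E(S_v)}\frac{1}{s(e)}
   \leq\sum_{v\in V(G)}\sum_{e\in E(S_v)}\frac{1}{d_G(v)}\\
  &=\sum_{v\in V(G)}\frac{d_G(v)}{d_G(v)}
   \leq n,
\end{align*}
where in the last line each isolated vertex contributes $0$ and every other vertex contributes $1$. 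Dividing by $2$ yields $\sum_{e\in E(G)}1/s(e)\leq n/2$, as desired. Alternatively, one can bypass the covering language: from $s(uv)=\max\{d_G(u),d_G(v)\}$ and the elementary inequality $1/\max\{a,b\}\leq\tfrac12(1/a+1/b)$ (the minimum of two positive reals is at most their average), one gets $\sum_{uv\in E(G)}\frac{1}{s(uv)}\leq\tfrac12\sum_{uv\in E(G)}\bigl(\tfrac{1}{d_G(u)}+\tfrac{1}{d_G(v)}\bigr)=\tfrac12\sum_{v\in V(G)}\tfrac{d_G(v)}{d_G(v)}\leq\tfrac n2$.

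There is no genuine obstacle here; the argument is short, and the only two points needing any care are (i) the identification $s(e)=\max\{d_G(u),d_G(v)\}$, which accounts for the fact that a ``star'' through $e$ may be centered at either endpoint, and (ii) obtaining the bound $n/2$ rather than the trivial $n$ — this is precisely what the factor $2$ from the double cover (equivalently, the factor $\tfrac12$ coming from $\min\leq\text{mean}$) supplies. Finally, I would note that this proposition is the base case $s=2$ of Theorem \ref{Thm:Local-generalized-Turan}(ii), whose inductive step uses the clique-neighbourhood trick from part (i) together with the inequality $s(K,G_x)+1\leq s(K,G)$ recorded earlier.
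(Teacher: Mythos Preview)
Your argument is correct. The paper does not actually supply its own proof of this proposition: it quotes it as Proposition~1 of Malec--Tompkins \cite{MT2023} and invokes it only as the base case $s=2$ of the induction in Theorem~\ref{Thm:Local-generalized-Turan}(ii). The remark immediately following the proposition does record the reformulation $\sum_{uv\in E(G)}1/\max\{d(u),d(v)\}\le n/2$, which is exactly the identification $s(uv)=\max\{d_G(u),d_G(v)\}$ on which your proof rests, so your approach is fully in line with what the paper suggests but leaves implicit. Both of your derivations---the star double cover parallel to the SPDC argument for Theorem~\ref{Thm:Weight-MT}, and the one-line inequality $1/\max\{a,b\}\le\tfrac12(1/a+1/b)$---are valid and are the standard ways to finish this.
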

\qed
\begin{remark}
The Malec-Tompkins Theorem is equivalent to the following inequality
$$
\sum_{uv\in E(G)}\frac{1}{\max\{d(u),d(v)\}}\leq \frac{n}{2}.
$$

\end{remark}

\section*{Acknowledgments}
Part of this work was completed when the authors  were visiting Alfr\'{e}d R\'{e}nyi Institute of Mathematics during
July 2024. The authors thank Professor Ervin Gy\H{o}ri for his warm hospitality.
The authors are grateful to Ervin Gy\H{o}ri, Yandong Bai, and Chuanqi Xiao for helpful discussions
on the proofs of Theorems \ref{Th:Local-BBRS} and \ref{Th:Local-Matching}. The second author also thanks Mingzhu Chen, Zhidan Luo, and Zhenyu Ni for helpful discussions on the short proof
of Theorems \ref{Theorem:MT} when he was visiting Hainan University
in April 2024. Finally, the second author is grateful to Tompkins for drawing his attention to the paper \cite{MT2023}.

\end{document}